\algnewcommand\algorithmicinput{\textbf{Input:}}
\algnewcommand\Input{\item[\algorithmicinput]}
\algnewcommand\algorithmicoutput{\textbf{Output:}}
\algnewcommand\Output{\item[\algorithmicoutput]}
\algnewcommand{\LineIf}[2]{\State \algorithmicif\, #1 \,\algorithmicthen\, #2 \,\algorithmicend\ \algorithmicif}
\algnewcommand{\LineForAll}[2]{\State \algorithmicforall\, #1 \,\algorithmicdo\, #2 \,\algorithmicend\ \algorithmicfor}
\algnewcommand{\Accept}{\textbf{accept}}
\algnewcommand{\Reject}{\textbf{reject}}
\newcommand{\dom}{\mathrm{dom}}
\newcommand{\LOGSPACE}{\ensuremath\mathsf{L}}
\newcommand{\gH}{ \mathrel{\mathcal{H}}}
\newtheorem{theorem}{Theorem}[section]
\newtheorem{lemma}[theorem]{Lemma}
\newtheorem{corollary}[theorem]{Corollary}
\newtheorem{proposition}[theorem]{Proposition}
\newtheorem{example}[theorem]{Example}
\newtheorem{problem}[theorem]{Problem}
\numberwithin{equation}{section}
\begin{document}

\title{Answering Five Open Problems Involving Semigroup Conjugacy}
\date{\today}
\author{Trevor Jack}
\address{Illinois Wesleyan University \\
Department of Mathematics \\
1312 Park St.\\
Bloomington, IL 61701}
\email{trevjack@gmail.edu}

\thanks{This work was partially supported by the Funda\c{c}\~{a}o para a Ci\^{e}ncia e a Tecnologia (Portuguese Foundation for Science and Technology) through the projects UIDB/00297/2020 and UIDP/00297/2020 (Centro de Matemática e Aplica\c{c}\~{o}es) and PTDC/MAT-PUR/31174/2017.}
\keywords{semigroups, conjugacy, partitions.}
\subjclass[2010]{Primary: 20M99; Secondary 20E45}

\begin{abstract}
A semigroup conjugacy is an equivalence relation that equals group conjugacy when the semigroup is a group. In this note, we answer five open problems related to semigroup conjugacy. (Problem One) We say a conjugacy $\sim$ is partition-covering if for every set $X$ and every partition of the set, there exists a semigroup with universe $X$ such that the partition gives the $\sim$-conjugacy classes of the semigroup. We prove that six well-studied conjugacy relations -- $\sim_o$, $\sim_c$, $\sim_n$, $\sim_p$, $\sim_{p^*}$, and $\sim_{tr}$ -- are all partition-covering. (Problem Two) For two semigroup elements $a,b \in S$, we say $a \sim_p b$ if there exists $u,v \in S$ such that $a=uv$ and $b=vu$. We give an example of a semigroup that is embeddable in a group for which $\sim_p$ is not transitive. (Problem Three) We construct an infinite chain of first-order definable semigroup conjugacies. (Problem Four) We construct a semigroup for which $\sim_o$ is a congruence and $S / \sim_o$ is not cancellative. (Problem Five) We construct a semigroup for which $\sim_p$ is not transitive while, for each of the semigroup's variants, $\sim_p$ is transitive.
\end{abstract}
\maketitle
\section{Introduction}
Conjugacy classes are important tools for analyzing group structure and significant work has been done to define an equivalence relation for semigroups that equals group conjugacy when the semigroup is a group. It turns out there are manys ways to define such an equivalence relation. In \cite{AK:FN}, Ara\'{u}jo, Kinyon, Konieczny, and Malheiro analyze four ways of defining conjugacy for semigroups. The focus for this note will be answering the following five open problems from their paper.

We say a conjugacy $\sim$ is \emph{partition-covering for a set $X$} if for every partition of the set, there exists a semigroup with universe $X$ such that the partition gives the $\sim$-conjugacy classes of the semigroup. If a conjugacy is covering for every set, we say the conjugacy is \emph{partition-covering}. Conjugacy in groups is certainly not partition-covering for many reasons. For example, there must always be a class with a single element, the identity element. Also, for finite groups, the Orbit-Stabilizer Theorem ensures that the size of each conjugacy class must divide the size of the group. To investigate whether some analogous structure exists for semigroups, Problem 6.10 from \cite{AK:FN} asks whether the conjugacies studied therein are partition-covering. Our main result is that each of the conjugacies are partition-covering. We then discuss several other notions of conjugacy that are not partition-covering.

Problems 6.4 and 6.22 from \cite{AK:FN} are concerned with the following relation. For a semigroup $S$ and any two elements $a,b \in S$, $a \sim_p b$ iff there exists $u,v \in S$ such that $a=uv$ and $b=vu$. This relation is not always transitive, but it is equal to group conjugacy when the semigroup is a group, We know that $\sim_p$ is transitive for free semigroups \cite{LA:SC}. Since free semigroups are embeddable in groups, Problem 6.4 asks whether $\sim_p$ is transitive for any semigroup that can be embedded in a group. We construct a semigroup that can be embedded in a group for which $\sim_p$ is not transitive. Problem 6.22 from \cite{AK:FN} asks whether $\sim_p$ must be transitive for a semigroup if it is transitive for all of the semigroup's variants. We give a counterexample and thus give a negative answer to the problem.

Problem 6.16 from \cite{AK:FN} is concerned with the following relation. For a semigroup $S$ and any two elements $a,b \in S$, $a \sim_o b$ iff there exists $u,b \in S$ such that $au=ub$ and $va=bv$. The problem asks if there is a semigroup for which $\sim_o$ is a congruence and $S / \sim_o$ is not cancellative. We define such an example that has infinite size and we leave as an open question whether a finite example exists.

Conjugacies can be partially ordered by inclusion, which gives rise to a lattice of conjugacies. Problem 6.21 from \cite{AK:FN} asks whether there exists an infinite set of first-order definable conjugacies that forms: (1) a chain ordered by inclusion or (2) an anti-chain ordered by inclusion. We give a partial answer by constructing an infinite chain of first-order definable conjugacies. We also extend the definition of $\sim_p$ to define an infinite chain of first-order definable relations, each of which is equal to group conjugacy when the semigroup is a group. From this, we define a new class of $\LOGSPACE$-complete problems in which we are given two elements and we are asked whether they are $\sim_p$ related.

\section{Preliminaries} \label{NotationSection}
Define the semigroup relation $\sim_g$ to be reflexive and, for any distinct semigroup elements $a,b \in S$, $a \sim_g b$ iff there is a subgroup $G \subseteq S$ containing $a$ and $b$ such that, for some $g \in G$, $gag^{-1} = b$. A semigroup relation $\sim$ is a \emph{conjugacy relation} iff, for any semigroup $S$ that is also a group, $\sim \, = \, \sim_g$. We now present several approaches for generalizing group conjugacy to semigroups. Note that, for groups, the condition for $\sim_g$ could equivalently be written as $ag = gb$. In \cite{OT:CM}, Otto defined semigroup conjugacy in a similar fashion:
$$\forall a,b \in S \, (a \sim_o b \Leftrightarrow ag = gb \text{ and } bh = ha \text{ for some }g,h \in S).$$

This is an equivalence relation, but it suffers the unfortunate drawback of being the universal equivalence relation for any semigroup $S$ that has a \emph{zero element}, $0$, satisfying $0s = s0 = 0$ for every $s \in S$. Ara\'{u}jo, Konieczny, and Malheiro avoided this by restricting the set from which $g$ and $h$ can be chosen. For a semigroup $S$ with zero element $0$ and any nonzero $a \in S \setminus \{0\}$, they defined $\mathbb{P}(a) := \{g \in S: (ma)g \neq 0 \text{ for any } ma \in S^1a \setminus \{0\}\}$. If $S$ has no zero element, then $\mathbb{P}(a) := S$. Finally, $\mathbb{P}^1(a) := \mathbb{P}(a) \cup \{1\}$ with $1$ being the identity element of $S^1$. They then defined $\sim_c$ in a similar fashion as $\sim_o$ \cite{AK:CS}:
$$a \sim_c b \Leftrightarrow ag = gb \text{ and } bh = ha \text{ for some }g \in \mathbb{P}^1(a) \text{ and some } h \in \mathbb{P}^1(b).$$

Note that $\sim_c \, = \, \sim_o$ if the semigroup does not have a zero element.

Alternatively, note that group elements $a,b \in G$ are conjugate iff there exist $g,h \in G$ such that $a=gh$ and $b=hg$. Accordingly, Lallement defined the following notion of semigroup conjugacy \cite{LA:SC}:
$$\forall a,b \in S \, (a \sim_p b \Leftrightarrow a = uv \text{ and } b=vu \text{ for some } u,v \in S).$$

In general, this relation is not transitive, so we denote the transitive closure of this relation as $\sim_{p^*}$.

A semigroup is an \emph{epigroup} if each of its elements has a power that is in a subgroup. Let $S$ be an epigroup and $s \in S$. Define $s^\omega$ to be the idempotent power of $s$. We can define conjugacy for $S$ as follows, motivated by the representation theory of semigroups \cite{IR:RT}. We say $a \sim_{tr} b$ iff there exists $g,h \in S^1$ such that: $ghg=g$, $hgh=h$, $hg = a^\omega$, $gh = b^\omega$, and $ga^{\omega+1}h = b^{\omega+1}$.

In 2018, Janusz Konieczny proposed the following definition \cite{JK:ND}:
$$a \sim_n b \Leftrightarrow ag=gb, bh=ha, hag=b,\text{ and }gbh=a\text{ for some }g,h\in S^1.$$

For conjugacies $\sim_1$ and $\sim_2$, we say $\sim_1 \, \subseteq \, \sim_2$ if for every semigroup $S$ for which the conjugacies are defined and every pair of elements $a,b \in S$, $a \sim_1 b$ implies $a \sim_2 b$. The inclusion is strict, $\sim_1 \, \subsetneq \, \sim_2$, if there exists a semigroup $S$ for which the conjugacies are defined and there exists elements $a,b \in S$ such that $a \not \sim_1 b$ and $a \sim_2 b$. The following is known:
\begin{equation} \label{eq:lattice}
\sim_n \, \subsetneq \, \sim_p \, \subsetneq \, \sim_{p^*} \, \subsetneq \sim_{tr} \, \subsetneq \, \sim_o \text{ and } \sim_n \, \subsetneq \, \sim_c \, \subsetneq \, \sim_o \text{ \cite{AK:FN,AK:CI}}.
\end{equation}

\section{Partition-Covering Problem} \label{sec:open}
Problem 6.10 from \cite{AK:FN} asks whether $\sim_o$, $\sim_p$, and $\sim_{tr}$ are partition-covering. Brute force calculation using the GAP package \emph{Smallsemi} \cite{DM:GAP} proves that $\sim_o$ and $\sim_p$ are partition-covering for any $|X| \leq 6$ \cite{AK:FN}. We will now prove that $\sim_o$, $\sim_c$, $\sim_n$, $\sim_p$, $\sim_{p^*}$, and $\sim_{tr}$ are partition-covering for any set. We start by defining the following relations. For any $a,b \in S$:
\[a \sim_\ell b \Leftrightarrow (ab=a \text{ and } ba=b)\]
\[a \sim_r b \Leftrightarrow (ab=b \text{ and } ba=a).\]
In other words, $a \sim_\ell b$ ($a \sim_r b$) iff $\{a,b\}$ is a left-(right-)zero subsemigroup of $S$.
\begin{lemma} \label{lem:pcband}
For any band $S$, $\sim_\ell$ and $\sim_r$ are both equivalence relations and both are contained in $\sim_n$.
\end{lemma}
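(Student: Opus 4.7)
The plan has two parts. First I would establish that $\sim_\ell$ is an equivalence relation on any band $S$, using the idempotent law $x^2=x$ throughout. Reflexivity $a\sim_\ell a$ is exactly $aa=a$. Symmetry is immediate from the definition, since the two defining equations are swapped under interchange of $a$ and $b$. For transitivity, suppose $a\sim_\ell b$ and $b\sim_\ell c$, so that $ab=a$, $ba=b$, $bc=b$, and $cb=c$. Then $ac=(ab)c=a(bc)=ab=a$ and $ca=(cb)a=c(ba)=cb=c$, as required. The proof for $\sim_r$ is entirely dual (left/right multiplications are swapped), so I would simply state that symmetric argument rather than repeat it.

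Next I would show $\sim_\ell \,\subseteq\, \sim_n$ by exhibiting explicit witnesses $g,h\in S^1$. If $a=b$ then $g=h=1$ trivially works, so assume $a\sim_\ell b$ with $ab=a$ and $ba=b$. I would take $g=a$ and $h=b$ and check each of the four clauses in the definition of $\sim_n$: $ag=a^2=a=ab=gb$; $bh=b^2=b=ba=ha$; $hag=ba\cdot a=b\cdot a=b$; and $gbh=a\cdot ab = a\cdot a = a$, where the band identity $a^2=a$ (and $b^2=b$) is used at every step. Hence $a\sim_n b$.

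For $\sim_r \,\subseteq\, \sim_n$, suppose $ab=b$ and $ba=a$. Here I would swap the roles and choose $g=b$, $h=a$. Checking: $ag=ab=b=b^2=gb$; $bh=ba=a=a^2=ha$; $hag=a\cdot a\cdot b = a\cdot b = b$; and $gbh=b\cdot b\cdot a = b\cdot a = a$. So again $a\sim_n b$.

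I do not anticipate a real obstacle: the only non-mechanical step is guessing the right pair $(g,h)$, and the natural symmetry between $\sim_\ell$ and $\sim_r$ tells us to try $(a,b)$ in one case and $(b,a)$ in the other; once these are written down, the verifications collapse immediately because every product of two elements from $\{a,b\}$ in the relevant expressions reduces via either a defining equation or idempotence.
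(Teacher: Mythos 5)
Your proof is correct and takes essentially the same route as the paper: the same transitivity computation ($ac=abc=ab=a$, $ca=cba=cb=c$) and the same witnesses $g=a,\,h=b$ (resp.\ $g=b,\,h=a$) for the inclusion in $\sim_n$, with the paper merely stating the witnesses and leaving the four-clause verification implicit. (One harmless slip: in the $\sim_\ell$ case the product $gbh$ is $ab\cdot b$, not $a\cdot ab$, though both reduce to $a$.)
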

\begin{proof}
These relations are clearly symmetric. Let $a,b,c \in S$ such that $a \sim_\ell b$ and $b \sim_\ell c$, then $ac = abc = ab = a$ and $ca = cba = cb = c$, proving that $\sim_\ell$ is transitive. The argument for $\sim_r$ is similar. Also, $aa=a$ implies $a \sim_\ell a$ and $a \sim_r a$, so these relations are equivalence relations for bands. Finally, for bands, $a \sim_\ell b$ ($a \sim_r b$) implies $a \sim_n b$ by picking $g=a$ ($g=b$) and $h=b$ ($h=a$) in Konieczny's definition for $\sim_n$ as stated above. 
\end{proof}
A \emph{normal band} is a semigroup $S$ such that, for all elements $a,b,c \in S$: (1) $aa = a$ and (2) $abca = acba$.
\begin{lemma} \label{lem:partcov}
A conjugacy relation $\sim$ is partition-covering if $\sim_\ell \, \subseteq \, \sim \, \subseteq \, \sim_{tr}$ or $\sim_r \, \subseteq \, \sim \, \subseteq \, \sim_{tr}$ for normal bands.
\end{lemma}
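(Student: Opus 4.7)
The plan is to sandwich $\sim$ between $\sim_\ell$ and $\sim_{tr}$ on a carefully constructed normal band. Given any partition $X = \bigsqcup_{i \in I} X_i$, I would build a normal band structure on $X$ whose $\sim_\ell$-classes are exactly the blocks and whose $\sim_{tr}$-classes are no larger than the blocks. By the hypothesis $\sim_\ell \, \subseteq \, \sim \, \subseteq \, \sim_{tr}$ on normal bands, the relation $\sim$ would then coincide with the partition. The $\sim_r$ case is the mirror construction, using right-zero components in place of left-zero ones.

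For the construction, I would totally order $I$ so that $(I, \min)$ is a semilattice, pick a distinguished element $y_i \in X_i$ for each $i$, and take the strong semilattice of left-zero semigroups with components $X_i$ and connecting maps $\phi_{i, j}: X_i \to X_j$ for $i > j$ defined by $\phi_{i, j}(x) = y_j$. Concretely, for $a \in X_i$ and $b \in X_j$, this gives $a \cdot b = a$ when $i \leq j$ and $a \cdot b = y_j$ when $i > j$. The standard result that a strong semilattice of rectangular bands is a normal band handles associativity and the identity $abca = acba$. The $\sim_\ell$ computation is then immediate: $ab = a$ forces $i \leq j$ and $ba = b$ forces $j \leq i$, so $a \sim_\ell b$ iff $a$ and $b$ lie in the same block.

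The main obstacle is showing that $\sim_{tr}$ does not merge distinct blocks on this band. Since the semigroup is a band, $a^\omega = a = a^{\omega+1}$, so the defining conditions for $a \sim_{tr} b$ collapse to the existence of $g, h \in S^1$ with $ghg = g$, $hgh = h$, $hg = a$, $gh = b$, and $gah = b$. A short check shows that $g = 1$ forces $h = 1$ and then $a = 1 \notin S$, so we may assume $g \in X_k$ and $h \in X_l$. Applying the multiplication rule gives two subcases for $hg = a$ (either $l \leq k$ with $h = a$ and $l = i$, or $l > k$ with $a = y_k$ and $k = i$) and two subcases for $gh = b$ (either $k \leq l$ with $g = b$ and $k = j$, or $k > l$ with $b = y_l$ and $l = j$). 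All four combinations yield $i = j$, so $\sim_{tr}$ keeps blocks separate.

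Finally, Lemma \ref{lem:pcband} together with (\ref{eq:lattice}) gives $\sim_\ell \, \subseteq \, \sim_n \, \subseteq \, \sim_{tr}$, which supplies the reverse inclusion on our band. Hence $\sim_\ell \, = \, \sim_{tr}$ on the constructed normal band, and the sandwich $\sim_\ell \, \subseteq \, \sim \, \subseteq \, \sim_{tr}$ pins $\sim$ to the prescribed partition, completing the argument.
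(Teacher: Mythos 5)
Your proof is correct and follows essentially the same route as the paper: both arguments realize the partition as a totally ordered semilattice (chain) of left-zero semigroups, note via Lemma~\ref{lem:pcband} and (\ref{eq:lattice}) that $\sim_\ell$ relates exactly the elements within a block, and use the symmetry of the semilattice component of $hg$ versus $gh$ (a min/max in the chain) to show $\sim_{tr}$ cannot relate elements of distinct blocks, so the sandwich forces $\sim$ to give the prescribed classes. The only difference is cosmetic: you glue the left-zero components with constant connecting maps onto basepoints $y_j$ (any total order on the index set suffices), whereas the paper orders the blocks by cardinality via ordinals and uses the coordinate-preserving product $(a,b)(c,d)=(\max_P(a,c),b)$, so your variant merely avoids the ordinal bookkeeping.
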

\begin{proof}
Suppose $\sim_\ell \, \subseteq \, \sim \, \subseteq \, \sim_{tr}$ for all normal bands. Let $X$ be any set and $P(X) = \{X_i\}_{i \in \mathcal{I}}$ be any partition of $X$. We will use Von Neumann's definition of an ordinal as a well-ordered set of all smaller ordinals. Using the axiom of choice, every set $X_i$ can be well-ordered and is then order isomorphic to some ordinal $\alpha_i$. So, we can identify $X_i$ with the set $\{(i,j):j \in \alpha_i\}$ and we will refer to the elements of $X_i$ by these pairs. Let $(P(X),<)$ be a total order such that $X_i < X_j$ whenever $\alpha_i \subsetneq \alpha_j$. We will not be concerned about how sets of equal cardinality are ordered. For every $i,j \in \mathcal{I}$, let $\max_P(i,j)=j$ if $X_i < X_j$ and $\max_P(i,j)=i$ otherwise. To define a semigroup whose $\sim$-conjugacy classes are $P(X)$, we define the following multiplication on $X$: $(a,b)(c,d) = (\max_P(a,c),b)$.

To verify that $(X,\cdot)$ is a semigroup, we need to check that, for every $(a,b),(c,d) \in X$, $(a,b)(c,d) \in X$ and that the defined multiplication is associative. If $\max_P(a,c) = a$, then $(a,b)(c,d) = (a,b) \in X_a$. Notably, each $(X_i,\cdot)$ is a left-zero subsemigroup. If $\max_P(a,c) = c$, then $\alpha_a \subseteq \alpha_c$, ensuring that $(c,b) \in X_c$. The multiplication is associative since $(X,\cdot)$ is a semilattice of left-zero subsemigroups. For this same reason, the semigroup is also a normal band.

We now want to show that, for any $(a,b),(c,d) \in X$, the following are equivalent:
\begin{enumerate}
\item $a=c$;
\item $(a,b) \sim_\ell (c,d)$; and
\item $(a,b) \sim_{tr} (c,d)$.
\end{enumerate}

(1) $\Rightarrow$ (2): By definition, $(a,b)(c,d) = (a,b)$ and $(c,d)(a,b)=(c,d)$.

(2) $\Rightarrow$ (3): $\sim_\ell \subseteq \sim_n$ by Lemma~\ref{lem:pcband} and $\sim_n \subseteq \sim_{tr}$ by (\ref{eq:lattice}).

(3) $\Rightarrow$ (1): Because $(a,b) \sim_{tr} (c,d)$, there exists $(g_1,g_2),(h_1,h_2) \in X$ such that $(g_1,g_2)(h_1,h_2) = (a,b)$ and $(h_1,h_2)(g_1,g_2) = (c,d)$. Because $(P(X),<)$ is a total order, then $\max_P(g_1,h_1) = \max_P(h_1,g_1)$ and thus $a=c$.\\

Consequently, for the constructed semigroup, $\sim_\ell \, = \, \sim \, = \, \sim_{tr}$ and the $\sim$-conjugacy classes are given by $P(X)$. By similar argument, if $\sim_r \, \subseteq \, \sim \, \subseteq \, \sim_{tr}$, then we can show $\sim$ is partition-covering by defining $(a,b)(c,d) = (\max_P(a,c),d)$.
\end{proof}
\begin{theorem} \label{th:prob10}
$\sim_o$, $\sim_c$, $\sim_n$, $\sim_p$, $\sim_{p^*}$, and $\sim_{tr}$ are partition-covering.
\end{theorem}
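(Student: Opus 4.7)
The plan is to handle the four conjugacies $\sim_n, \sim_p, \sim_{p^*}, \sim_{tr}$ uniformly via Lemma~\ref{lem:partcov}, and to construct a separate semigroup for $\sim_o$ and $\sim_c$. For the first group, Lemma~\ref{lem:pcband} gives $\sim_\ell \subseteq \sim_n$ on every band, while (\ref{eq:lattice}) gives $\sim_n \subseteq \sim_p \subseteq \sim_{p^*} \subseteq \sim_{tr}$. Combining these yields $\sim_\ell \subseteq \sim \subseteq \sim_{tr}$ on every normal band for each $\sim \in \{\sim_n, \sim_p, \sim_{p^*}, \sim_{tr}\}$, and Lemma~\ref{lem:partcov} then immediately gives partition-coverage.

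For $\sim_o$ and $\sim_c$, Lemma~\ref{lem:partcov} fails: on the very normal band built in its proof, $\sim_o$ is already universal. Indeed, for any pair $(a_1, a_2), (b_1, b_2) \in X$ one can take $g = (g_1, a_2)$ with $g_1 \geq \max_P(a_1, b_1)$, forcing $ag = gb$, and a similar choice for $h$ yields $bh = ha$. Since $\sim_c = \sim_o$ in the absence of a zero, both conjugacies become universal on that band. My plan is therefore to modify the construction by designating a two-sided zero $z \in X$ and sending every cross-class product to $z$. In the resulting normal band with zero, $\mathbb{P}(a)$ collapses to the non-zero portion of the class of $a$, which forbids cross-class $\sim_c$-conjugation while within-class $\sim_c$-equivalence is still witnessed by $g = a$ and $h = b$.

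The main obstacle will be that any semigroup with a zero $z$ has $\{z\}$ as its own $\sim_c$-class: the second half of the definition of $\sim_c$ requires $ah = hz = z$ for some $h \in \mathbb{P}^1(a)$, yet every such $h$ gives $ah = a \neq z$. To realize an arbitrary partition $P$ without a spurious class, $\{z\}$ must be absorbed into some part of $P$. The cleanest resolution is to choose $z$ inside a pre-existing singleton part of $P$; for partitions with no singleton part, a further variant is needed, such as a normal band built from multiple local zeros or a right-zero analogue in the spirit of the second half of Lemma~\ref{lem:partcov}'s proof.
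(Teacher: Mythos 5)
Your treatment of $\sim_n$, $\sim_p$, $\sim_{p^*}$, and $\sim_{tr}$ via Lemma~\ref{lem:partcov} is correct and is exactly the paper's argument, and your observation that $\sim_o$ is universal on the normal band of that lemma correctly diagnoses why a separate construction is needed. The gap is in that separate construction. First, any semigroup with a zero element has $\sim_o$ equal to the universal relation (as noted in Section~\ref{NotationSection}), so a construction built around a two-sided zero $z$ cannot realize any nontrivial partition as the $\sim_o$-classes; your proposal in fact never produces a working construction for $\sim_o$ at all. Second, even restricting attention to $\sim_c$, you correctly identify that $\{z\}$ is forced to be its own $\sim_c$-class, but this is fatal rather than merely inconvenient: it means \emph{no} semigroup with a zero can realize a partition lacking a singleton part, so the ``further variant'' you defer to must be zero-free --- and in a zero-free semigroup $\sim_c \, = \, \sim_o$, which collapses the problem back to the $\sim_o$ case you have not solved. (For partitions that do contain a singleton part, your construction --- left-zero multiplication within each part, all cross-part products equal to $z$ --- does appear to work for $\sim_c$, but that is not enough.)

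The paper's construction sidesteps both obstructions simultaneously: index the parts by a commutative group $(\mathcal{I},\cdot)$ and define $(a,b)(c,d) = (a \cdot c, 0)$. When the partition has more than one part this semigroup has no zero element, cancellativity in $\mathcal{I}$ forces $(a,b) \sim_o (c,d)$ to imply $a = c$, the identity $1_I$ witnesses the converse, and $\sim_c \, = \, \sim_o$ comes for free; the one-part partition is handled separately by a left-zero semigroup. The essential idea you are missing is to make the cross-class products \emph{cancellative} (via a group on the index set) rather than \emph{absorbing} (via a zero).
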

\begin{proof}
Since $\sim_\ell \, \subseteq \, \sim_n$, then $\sim_n$, $\sim_p$, $\sim_{p^*}$, and $\sim_{tr}$ are all partition-covering by Lemma~\ref{lem:partcov}. We need a different construction for $\sim_o$ and $\sim_c$.

As with the proof of Lemma~\ref{lem:partcov}, let $X$ be any set, $P(X) = \{X_i\}_{i \in \mathcal{I}}$ be any partition of $X$, and $\{(i,j):j \in \alpha_i\}$ be the elements of $X_i$. For any cardinal, there exists a commutative group whose number of elements equals that cardinal. Let $(\mathcal{I},\cdot)$ be one such commutative group with identity element $1_I$. To define a semigroup whose $o$-conjugacy classes are $\{X_i\}_{i \in \mathcal{I}}$, we define the following binary operation on $X$: $(a,b)(c,d) = (a \cdot c, 0)$. For any $(a,b),(c,d) \in X$, $(a \cdot c,0) \in X$, since $0 \in \alpha_i$ for every $i \in \mathcal{I}$. Also, for any $(a,b),(c,d),(e,f) \in X$, $(a,b)((c,d)(e,f)) = (a \cdot (c \cdot e),0) = ((a \cdot c) \cdot e,0) = ((a,b)(c,d))(e,f))$, since $(\mathcal{I},\cdot)$ is a group.

We prove that the partition equals the $\sim_o$-conjugacy classes by proving $(a,b) \sim_o (c,d)$ iff $a=c$. Suppose there exists $(e,f) \in X$ such that $(a,b)(e,f) = (e,f)(c,d)$. Then $(a \cdot e,0)=(e \cdot c,0)$. Because $(\mathcal{I},\cdot)$ is a commutative group, $a=c$. Conversely, for any $(a,b),(a,d) \in X_a$, $(a,0) = (a,b)(1_I,0) = (1_I,0)(a,d) = (1_I,0)(a,b) = (a,d)(1_I,0)$.

If $P(X)$ is not a singleton, then the semigroup does not have a zero element and, thus, $P(X)$ is also covered by $\sim_c$. For $P(X) = \{X\}$, we use a third construction, defining $ab=a$ for all $a,b \in X$. Then $a \sim_c b$ for every $a,b \in X$ since $a(a) = (a)b$ and $(b)a = b(b)$. That is, $\sim_c$ is partition-covering.
\end{proof}

\section{Non-Partition Covering Conjugacies}
A natural question to ask at this stage is whether there are any semigroup conjugacies that are \emph{not} partition-covering. Indeed there are. Consider the first relation we defined, $\sim_g$. The smallest semigroup that has distinct elements $a$ and $b$ such that $a \sim_g b$ is the symmetric group $S_3$. But there is another well-studied conjugacy that is not partition-covering.

Let $U(S)$ be the group of units for a semigroup $S$. Then for any $a,b \in S$, we define $a \sim_u b$ iff there exists $u \in U(S)$ such that $u^{-1}au=b$ and $ubu^{-1} = a$ \cite{KM:OC}. Kudryavtseva and Mazorchuk note that, for transformation semigroups, this notion of conjugacy has a nice connection to graph isomorphisms. Let $T_n$ be the full transformation semigroup over the points $\{1,\dots,n\}$. For $a \in T_n$, define the graph $\Gamma(a)$ to have vertices $\{1,\dots,n\}$ and edges $\{(i,j):ia=j\}$. Then for any $a,b \in T_n$, $a \sim_u b$ iff $\Gamma(a) \cong \Gamma(b)$ \cite[Prop 6]{KM:OC}.

Note that $\sim_u$ will only relate distinct elements if $U(S)$ contains more than just the identity element. Since the identity element will always be in its own conjugacy class, $\sim_u$ cannot be partition-covering for any set with more than one element. The smallest group that has distinct conjugate elements is the symmetric group $S_3$. There is a smaller example for $\sim_u$. The following is the multiplication table for \emph{SmallSemigroup}(4, 96) of \cite{DM:GAP}:

\begin{center}
\begin{tabular}{ c| c | c | c | c | c |}
$\cdot$ & 0 & 1 & 2 & 3\\
\hline
0 & 0 & 1 & 2 & 3 \\ 
\hline
1 & 1 & 0 & 2 & 3 \\ 
\hline
2 & 2 & 3 & 2 & 3 \\ 
\hline
3 & 3 & 2 & 2 & 3 \\ 
\hline
\end{tabular}
\end{center}

$2 \sim_u 3$ since $1 \cdot 2 \cdot 1=3$ and $1 \cdot 3 \cdot 1 = 2$.

Certainly, any conjugacy $\sim$ satisfying $\sim \, \subsetneq \, \sim_u$ will also fail to be partition-covering. So another natural question is whether $\sim_u$ is the smallest conjugacy that does not equal $\sim_g$. We now prove that it is not.

Note that both constructions in Lemma~\ref{lem:partcov} and Theorem~\ref{th:prob10} rely on idempotents being conjugate to each other. We can prevent idempotents from being conjugate by using Green's $\gH$ relation. Recall that for any semigroup $S$ and any $a,b \in S$: $a \gH b$ iff ($aS^1 = bS^1$ and $S^1a=S^1b$). Let $H_a := \{ b \in S: a \gH b\}$. Then $\sim_o \cap \gH$ is an equivalence relation. If the semigroup is a group, then all of its elements are $\gH$-related meaning that $\sim_o \cap \gH \, = \, \sim_g$. So, $\sim_o \cap \gH$ is a conjugacy. Similarly, we can consider intersecting $\gH$ with $\sim_{tr}$, $\sim_{p^*}$, $\sim_n$, and $\sim_u$ to obtain four more conjugacies. From \ref{eq:lattice}, we know:
$$\sim_n \cap \gH  \, \subseteq \, \sim_{p^*} \cap \gH  \, \subseteq \, \sim_{tr} \cap \gH  \, \subseteq \, \sim_o \cap \gH$$

Certainly, $\sim_u \cap \gH \, \subseteq \, \sim_n \cap \gH$, since the $u$ and $u^{-1}$ in the definition of $\sim_u$ can serve as the $g$ and $h$ in the definition for $\sim_n$. The rest of this section will prove, by examples, the following structure theorem.
\begin{theorem} \label{thm:structure}
$$\sim_g \, \subsetneq \, \sim_u \cap \gH \, \subsetneq \, \sim_n \cap \gH  \, \subsetneq \, \sim_{p^*} \cap \gH  \, \subsetneq \, \sim_{tr} \cap \gH  \, \subsetneq \, \sim_o \cap \gH,$$
$$\sim_u \cap \gH \, \subsetneq \, \sim_u \, \not\subseteq \, \sim_o \cap \gH, \,\sim_n \cap \gH \, \not \subseteq \, \sim_u, \, \sim_{p^*} \cap \gH \, \not \subseteq \, \sim_n, \, \text{ and } \, \sim_o \cap \gH \, \not \subseteq \, \sim_{tr}.$$
\end{theorem}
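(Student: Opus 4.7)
The plan is to verify the theorem by exhibiting explicit witnessing semigroups. Because (\ref{eq:lattice}) already gives $\sim_n \, \subseteq \, \sim_{p^*} \, \subseteq \, \sim_{tr} \, \subseteq \, \sim_o$ and the containment $\sim_u \cap \gH \, \subseteq \, \sim_n \cap \gH$ was established just before the theorem statement, every inclusion in the main chain holds automatically; only the \emph{strictness} of each containment needs an example. Moreover, each stated non-inclusion $B \cap \gH \, \not\subseteq \, A$ (with $A \subseteq B$) automatically produces the strict inclusion $A \cap \gH \, \subsetneq \, B \cap \gH$, so the four non-inclusions in the theorem together with one extra example for $\sim_g \, \subsetneq \, \sim_u \cap \gH$ cover everything.

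The $\sim_u$-related claims can be read off the \emph{SmallSemigroup}(4,96) table already displayed. The paper notes $2 \sim_u 3$; direct inspection of columns 2 and 3 gives $S^1 \cdot 2 = \{2\}$ and $S^1 \cdot 3 = \{3\}$, so $2 \not\gH 3$, and since $\sim_o \cap \gH \, \subseteq \, \gH$ this single example proves both $\sim_u \cap \gH \, \subsetneq \, \sim_u$ and $\sim_u \, \not\subseteq \, \sim_o \cap \gH$. For $\sim_g \, \subsetneq \, \sim_u \cap \gH$ I need a semigroup with a non-trivial group of units together with an $\gH$-class disjoint from the unit group on which unit-conjugation acts non-trivially: then the conjugate pair is $\gH$-related and $\sim_u$-related but cannot be $\sim_g$-related because no subgroup of $S$ contains both the pair and a conjugator.

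For the remaining three non-inclusions the strategy is the same: locate or construct a small finite semigroup (most likely from the \emph{Smallsemi} catalogue, since the paper has already used it) in which a specified $\gH$-class is structurally rich enough to admit the ``larger'' conjugacy while the ambient semigroup is too lean to supply the ``smaller'' one. Concretely, for $\sim_n \cap \gH \, \not\subseteq \, \sim_u$ I want Konieczny data $(g,h) \in (S^1)^2$ with no unit of $S$ playing that role; for $\sim_{p^*} \cap \gH \, \not\subseteq \, \sim_n$ I want a two-step $\sim_p$-chain inside an $\gH$-class whose endpoints admit no single $\sim_n$-witness; for $\sim_o \cap \gH \, \not\subseteq \, \sim_{tr}$ I want Otto-conjugate, $\gH$-related elements whose idempotent powers cannot be coupled via any pair satisfying the five simultaneous equations $ghg=g$, $hgh=h$, $hg=a^\omega$, $gh=b^\omega$, $ga^{\omega+1}h=b^{\omega+1}$.

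The main obstacle is verifying the negative side in each example. Showing $a \not\sim_{tr} b$ or $a \not\sim_n b$ requires ruling out \emph{every} choice of $g,h \in S^1$, and the $\sim_{tr}$ case is the most demanding because of its rigid five-equation system. The cleanest route is to pick the witnessing semigroups small enough that direct exhaustion (by hand or with a brief GAP check) resolves the negative claims, while still large enough to contain the positive data, in particular the two-step $\sim_p$-chain inside a single $\gH$-class that separates $\sim_{p^*}$ from $\sim_n$.
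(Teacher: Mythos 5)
Your overall strategy (prove everything by exhibiting witnessing semigroups, and reduce most strictness claims to the stated non-inclusions) is exactly the paper's approach, and the reductions you do make are sound: $B \cap \gH \not\subseteq A$ with $A \subseteq B$ indeed yields $A \cap \gH \subsetneq B \cap \gH$, and your reading of \emph{SmallSemigroup}(4,96) correctly settles both $\sim_u \cap \gH \, \subsetneq \, \sim_u$ and $\sim_u \, \not\subseteq \, \sim_o \cap \gH$. But there is a concrete hole in the bookkeeping: the strict inclusion $\sim_{p^*} \cap \gH \, \subsetneq \, \sim_{tr} \cap \gH$ is \emph{not} covered by "the four non-inclusions plus one extra example." The non-inclusion that would deliver it, $\sim_{tr} \cap \gH \not\subseteq \sim_{p^*}$, is not part of the theorem --- the paper explicitly leaves it open --- and the related example showing $\sim_{tr} \cap \gH \not\subseteq \sim_p$ does not suffice, since the witnessing pair could still be $\sim_{p^*}$-related. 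So this strictness needs its own witness; the paper uses \emph{SmallSemigroup}(5,255), where the $\sim_{tr} \cap \gH$ classes are $\{\{0\},\{1,2\},\{3\},\{4\}\}$ while the $\sim_{p^*} \cap \gH$ classes are all singletons.

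The second problem is that, beyond the (4,96) computation, the proposal never actually produces the examples: for $\sim_g \subsetneq \sim_u \cap \gH$, $\sim_n \cap \gH \not\subseteq \sim_u$, $\sim_{p^*} \cap \gH \not\subseteq \sim_n$, and $\sim_o \cap \gH \not\subseteq \sim_{tr}$ you only describe what a witness should look like, whereas these explicit semigroups (a 13-element transformation semigroup, a unit-free 10-element one, an 11-element one, and \emph{SmallSemigroup}(3,11), respectively, in the paper) together with the exhaustive negative verifications constitute essentially the entire proof; the hardest content, ruling out \emph{all} candidate witnesses $g,h$ for $\sim_n$ or $\sim_{tr}$, is precisely what you defer. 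Also, your criterion for the $\sim_g \subsetneq \sim_u \cap \gH$ example is too weak as stated: putting the $\sim_u$-conjugate pair in an $\gH$-class disjoint from the unit group does not by itself rule out $\sim_g$, since the pair could lie in (and be conjugate within) some subgroup whose identity is another idempotent; the paper's example is chosen so that the two elements are not subgroup elements at all, which is the condition you actually need.
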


The following diagram illustrates Theorem~\ref{thm:structure}. Each solid line represents strict inclusion. Each dotted line represents non-inclusion. We leave as an open problem whether or not $\sim_{tr}\cap \gH \, \subseteq \, \sim_{p^*}$. 

\begin{tikzpicture}
    \node (1) at (-2, 0) {$\sim_g$};
    \node (2) at (0, 1.1) {$\sim_u$};
    \node (3) at (2, 1.2) {$\sim_n$};
    \node (4) at (4, 1.3) {$\sim_{p^*}$};
    \node (5) at (6, 1.4) {$\sim_{tr}$};
    \node (6) at (8, 1.5) {$\sim_o$};
    \node (8) at (0, .1) {$\sim_u \cap \gH$};
    \node (9) at (2, .2) {$\sim_n \cap \gH$};
    \node (10) at (4, .3) {$\sim_{p^*} \cap \gH$};
    \node (11) at (6, .4) {$\sim_{tr} \cap \gH$};
    \node (12) at (8, .5) {$\sim_o \cap \gH$};

    \draw[-] (1) to (8);
    \draw[-] (2) to (3);
    \draw[-] (3) to (4);
    \draw[-] (4) to (5);
    \draw[-] (5) to (6);
    \draw[-] (8) to (9);
    \draw[-] (9) to (10);
    \draw[-] (10) to (11);
    \draw[-] (11) to (12);
    \draw[-] (8) to (2);
    \draw[-] (9) to (3);
    \draw[-] (10) to (4);
    \draw[-] (11) to (5);
    \draw[-] (12) to (6);
    \draw[dotted] (2) to (9);
    \draw[dotted] (3) to (10);
    \draw[dotted] (3) to (11);
    \draw[dotted] (5) to (12);
\end{tikzpicture}

Consider \emph{SmallSemigroup}(5, 255) of \cite{DM:GAP}, whose multiplication table is:

\begin{center}
\begin{tabular}{ c| c | c | c | c | c |}
$\cdot$ & 0 & 1 & 2 & 3 & 4\\
\hline
0 & 0 & 0 & 0 & 0 & 0 \\ 
\hline
1 & 0 & 0 & 0 & 1 & 2 \\ 
\hline
2 & 0 & 0 & 0 & 2 & 1 \\ 
\hline
3 & 0 & 1 & 2 & 3 & 4 \\ 
\hline
4 & 0 & 2 & 1 & 4 & 3 \\ 
\hline
\end{tabular}
\end{center}

The equivalence classes of $\sim_o \cap \gH$, $\sim_{tr} \cap \gH$, and $\sim_{p^*} \cap \gH$ are $\{\{0\},\{1,2\},\{3,4\}\}$, $\{\{0\},\{1,2\},\{3\},\{4\}\}$, and $\{\{0\},\{1\},\{2\},\{3\},\{4\}\}$, respectively. Using GAP \cite{DM:GAP}, we verified that $\sim_n \cap \gH \, = \, \sim_{p^*} \cap \gH$ for all semigroups with 6 or fewer elements. Example~\ref{ex:nNEQp} demonstrates that $\sim_n \cap \gH \, \subsetneq \, \sim_{p^*} \cap \gH$. To define the semigroup, we use notation adopted by GAP for transformations. If $s \in T_4$ is a transformation with domain $\{1,2,3,4\}$ such that $1s = a$, $2s = b$, $3s = c$, and $4s = d$ with $\{a,b,c,d\} \subseteq \{1,2,3,4\}$, then we denote $s$ as $[a,b,c,d]$.

\begin{example}\label{ex:nNEQp}
\begin{align*}S:= \{&[ 3, 2, 1, 1 ], [ 1, 2, 3, 3 ], [ 3, 2, 4, 3 ], [ 3, 2, 4, 4 ], [ 4, 2, 3, 3 ],\\&[ 1, 2, 1, 1 ], [ 3, 2, 3, 3 ], [ 4, 2, 3, 4 ], [ 4, 2, 4, 4 ]\}\end{align*}
\end{example}

Note that:
\begin{enumerate}
\item $[3,2,4,4] \gH [4,2,3,3]$, since $[3,2,4,4][3,2,4,3] = [4,2,3,3]$, $[4,2,3,3][3,2,4,3] = [3,2,4,4]$, $[3,2,1,1][3,2,4,4] = [4,2,3,3]$, and $[3,2,1,1][4,2,3,3] = [3,2,4,4]$.
\item $[3,2,4,4] = [3,2,1,1][4,2,3,3]$ and $[1,2,1,1] = [4,2,3,3][3,2,1,1]$, 
\item $[4,2,3,3] = [3,2,1,1][3,2,4,3]$ and $[1,2,1,1] = [3,2,4,3][3,2,1,1]$, and
\item $[3,2,4,4] \not \sim_n [4,2,3,3]$.
\end{enumerate}

The following example proves that $\sim_u \cap \gH \, \subsetneq \sim_n \cap \gH$.

\begin{example}
$S:=\langle [4,2,3,4], [2,2,3,4], [3,4,2,3], [4,4,3,2]\rangle$
\end{example}
$S$ has 12 elements, the non-generator elements being:
$$\{[2,4,3,2],[4,4,2,3],[2,3,4,2],[3,2,4,3],[3,3,2,4],[4,3,2,4],[3,3,4,2],[2,2,4,3]\},$$

The nontrivial $(\sim_n \cap \gH)$-conjugacy classes are:
$$\{\{[3,4,2,3],[2,3,4,2]\},\{[4,4,3,2], [3,3,2,4], [2,2,4,3]\},$$
$$\{[2,4,3,2],[3,2,4,3],[4,3,2,4]\},\{[4,4,2,3],[3,3,4,2]\}.$$
Since the semigroup does not have a unit, its $(\sim_u \cap \gH)$-conjugacy classes are all singleton sets.

The following example demonstrates that $\sim_g \, \subsetneq \, \sim_u \cap \gH$.

\begin{example}
$S:=\langle [4,4,2,4],[4,2,4,2],[1,4,3,2],[4,3,4,3]\rangle$
\end{example}
$S$ has 13 elements, the non-generator elements being:
$$\{[4,4,4,4],[2,2,2,2],[2,2,4,2],[3,3,3,3],[2,4,2,4],$$
$$[1,2,3,4],[2,3,2,3],[3,4,3,4],[3,2,3,2]\}.$$
It has one pair of elements that are conjugate with respect to $\sim_u \cap \gH$: $[2,4,2,4]$ and $[4,2,4,2]$. Neither of these elements are subgroup elements, so $S$ has no $\sim_g$-conjugate elements. We leave as an open problem whether $\sim_u \cap \gH \, = \, \sim_g$ for all semigroups of size less than 13.

For \emph{SmallSemigroup(4, 96)} of \cite{DM:GAP}, discussed above, elements $2$ and $3$ are not $\gH$-related, so $\sim_u \, \not \subseteq \, \sim_o \cap \gH$. The following example demonstrates that $\sim_n \cap \gH \, \not \subseteq \, \sim_u$.
\begin{example} \label{ex:hnequ}
\begin{align*}S:= \{&[1,2,1,1], [3,3,4,1 ], [ 1,1,4,3], [1,1,3,4], [3,3,3,3],\\&[1,1,1,1], [4,4,1,3], [4,4,3,1], [3,3,1,4], [4,4,4,4]\}\end{align*}
\end{example}
$S$ has no units, so its $\sim_u$-conjugacy classes are trivial. The $(\sim_n \cap \gH)$-conjugacy classes are $\{\{[3,3,4,1],[4,4,1,3]\},\{[1,1,4,3],[4,4,3,1],[3,3,1,4]\}\}$. For example, $[3,3,4,1]\sim_n[4,4,1,3]$ by picking $g=h=[1,1,4,3]$.

\begin{example}
\begin{align*}&S:= \{[1,1,1,1],[4,3,3,4],[1,3,3,1],[4,3,4,3],[4,4,4,4],\\
&[3,4,4,3],[1,3,1,3],[3,4,3,4],[3,3,3,3],[3,1,1,3],[3,1,3,1]\}\end{align*}
\end{example}

Note that $[3,1,3,1] \gH [1,3,1,3]$ since $[3,1,3,1][3,1,1,3] = [1,3,1,3]$, $[1,3,1,3]$ $[3,1,1,3] = [3,1,3,1]$, $[4,3,4,3][3,1,3,1] = [1,3,1,3]$, and $[4,3,4,3][1,3,1,3] = [3,1,3,1]$. Also, $[3,1,3,1] \sim_{p^*} [1,3,1,3]$ since $[3,1,1,3][1,3,1,3] = [1,1,1,1]$, $[1,3,1,3]$ $[3,1,1,3] = [3,1,3,1]$, $[1,3,3,1][1,3,1,3] = [1,1,1,1]$, and $[1,3,1,3]$ $[1,3,3,1] = [1,3,1,3]$. But, $[3,1,3,1] \not \sim_n [1,3,1,3]$ since the only elements with the correct kernal and image are the elements themselves, which do not satisfy the definition of $\sim_n$. Thus, $\sim_{p^*} \cap \gH \not \subseteq \sim_n$.

\begin{example}
\begin{align*}&S:= \{[ 2, 3, 2, 3 ],[ 2, 1, 1, 1 ],[ 2, 3, 2, 2 ],[ 3, 2, 3, 3 ],[ 3, 2, 3, 2 ],\\
&[ 1, 1, 1, 1 ],[ 3, 2, 2, 2 ],[ 1, 2, 2, 2 ],[ 2, 3, 3, 3 ],[ 2, 2, 2, 2 ],[ 3, 3, 3, 3 ]\}\end{align*}
\end{example}
Note that $[3,2,2,2] \sim_{tr} [2,3,3,3]$ by letting $g = [2,2,2,2]$ and $h=[3,3,3,3]$. Also, $[2,1,1,1][3,2,2,2]=[2,3,3,3]$, $[2,1,1,1][2,3,3,3]=[3,2,2,2]$, $[3,2,2,2]$\\
$[2,3,2,3]=[2,3,3,3]$, and $[2,3,3,3][2,3,2,3] = [3,2,2,2]$, so $[3,2,2,2] \gH [2,3,3,3]$. However, there are no elements $u,v \in S$ such that $uv = [3,2,2,2]$ and $vu = [2,3,3,3]$, so $[3,2,2,2] \not \sim_p [2,3,3,3]$. This is evident from the following multiplication table, for which elements are enumerated by the order they appear in the set defined above. In particular, $[3,2,2,2]$ is element $7$ and $[2,3,3,3]$ is element $9$. Consequently, $\sim_{tr} \cap \gH \, \not \subseteq \, \sim_p$. Since $\sim_n \, \subseteq \, \sim_p$, then $\sim_{tr} \cap \gH \, \not \subseteq \, \sim_n$.

\begin{center}
\begin{tabular}{ c | c | c | c | c | c | c | c | c | c | c | c |}
$\cdot$ & 1 & 2 & 3 & 4 & 5 & 6 & 7 & 8 & 9 & 10 & 11\\ \hline
1 & 5 & 6 & 5 & 1 & 1 & 6 & 10 & 10 & 11 & 10 & 11\\ \hline
2 & 7 & 8 & 7 & 9 & 9 & 6 & 9 & 2 & 7 & 10 & 11\\ \hline
3 & 4 & 6 & 4 & 3 & 3 & 6 & 10 & 10 & 11 & 10 & 11\\ \hline
4 & 3 & 6 & 3 & 4 & 4 & 6 & 10 & 10 & 11 & 10 & 11\\ \hline
5 & 1 & 6 & 1 & 5 & 5 & 6 & 10 & 10 & 11 & 10 & 11\\ \hline
6 & 10 & 10 & 10 & 11 & 11 & 6 & 11 & 6 & 10 & 10 & 11\\ \hline
7 & 9 & 6 & 9 & 7 & 7 & 6 & 10 & 10 & 11 & 10 & 11\\ \hline
8 & 9 & 2 & 9 & 7 & 7 & 6 & 7 & 8 & 9 & 10 & 11\\ \hline
9 & 7 & 6 & 7 & 9 & 9 & 6 & 10 & 10 & 11 & 10 & 11\\ \hline
10 & 11 & 6 & 11 & 10 & 10 & 6 & 10 & 10 & 11 & 10 & 11\\ \hline
11 & 10 & 6 & 10 & 11 & 11 & 6 & 10 & 10 & 11 & 10 & 11\\
\end{tabular}
\end{center}

\begin{example}
\emph{SmallSemigroup}(3,11) of \cite{DM:GAP} has the following multiplication table:
\begin{center}
\begin{tabular}{ c | c | c | c |}
$\cdot$ & 0 & 1 & 2 \\ \hline
0 & 0 & 1 & 2 \\  \hline
1 & 1 & 0 & 2 \\  \hline
2 & 2 & 2 & 2 \\ 
\hline
\end{tabular}
\end{center}
\end{example}
Note that $\{0,1\}$ is a subgroup, so $0 \gH 1$. Also, the semigroup has a zero element, so $\sim_o$ is the universal relation. We prove that $1 \not \sim_{tr} 0$ by considering every possible value for $g$ from the definition for $\sim_{tr}$ given above. Note that, for every value, $ghg = g$ and $hgh=h$ together imply that $h=g$. If $g = 2$, then $g1h=2 \neq 0$. In every other case, $g1h=1 \neq 0$. So, $0 \not \sim_{tr} 1$ and thus $\sim_o \cap \gH \, \not \subseteq \, \sim_{tr}$.

\section{\texorpdfstring{$\sim_p$-Transitivity Problems}{~p-Transitivity Problems}}
We now answer Problem 6.2 from \cite{AK:FN} by defining a semigroup for which $\sim_p$ is not transitive and the semigroup is embeddable in a group. We first need some notation. Let $S$ be a finitely presented monoid, $S:=\langle X\, | \, R\rangle$. Then its \emph{left graph} has vertices $X$ and an undirected edge $(a_1,b_1)$ for each relation $a_1 \cdots a_i = b_1 \cdots b_j \in R$. For example, the left graph of $S:=\langle a,b,c \, | \, ab = bc, aa = bb, cb = ca \rangle$ would have three edges: two edges between vertices $a$ and $b$ and a loop from $c$ to itself. The \emph{right graph} has the same vertices and an undirected edge $(a_i,b_j)$ for each relation $a_1 \cdots a_i = b_1 \cdots b_j \in R$. The right graph of $S$ has an edge between $b$ and $c$ and two edges between $b$ and $a$. Adyan proved the following theorem in 1960. \cite{SA:OE,BF:WP}

\begin{theorem} \label{thm:embedding}
A finitely presented monoid is embeddable in a group if neither its left nor right graphs have cycles.
\end{theorem}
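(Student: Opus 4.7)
The plan is to follow the diagrammatic reformulation of Adyan's original argument developed by Remmers. Given a finitely presented monoid $\langle X \mid R\rangle$, a \emph{semigroup diagram} over the presentation is a finite, planar, simply connected 2-complex whose oriented 1-cells are labeled by elements of $X$ and whose 2-cells are each labeled by a relation in $R$, with the two sides of the relation reading along the upper and lower boundary arcs of the corresponding 2-cell. A standard (Van Kampen style) result is that two words $u, v$ in the free monoid on $X$ represent the same element of the monoid if and only if there exists a semigroup diagram whose top boundary reads $u$ and whose bottom boundary reads $v$.

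First I would define a \emph{left chain} in such a diagram to be a maximal vertical sequence of 2-cells, each sharing its leftmost boundary vertex with the next; reading the leftmost letter emanating from each 2-cell along a left chain produces a walk in the left graph of the presentation. A symmetric definition yields \emph{right chains} with corresponding walks in the right graph. The core combinatorial lemma is that in a diagram of minimum size witnessing a given equation, no left chain can close up, because a closed chain would on the one hand produce a cycle in the left graph, and on the other hand admit a local reduction yielding a diagram with strictly fewer 2-cells representing the same equation. Acyclicity of the left graph therefore forces every left chain in a minimal diagram to be linear, and the symmetric statement holds for right chains.

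The second step is to derive from this structural restriction that the monoid is both left and right cancellative, and that any two elements admit a common right (and a common left) multiple. Cancellativity follows because a putative violation $au = av$ with $u \neq v$ would produce a minimal diagram whose top and bottom begin with the same letter $a$; tracing the leftmost 2-cell downward initiates a left chain that, by planarity and the relationship between $u$ and $v$, must close, contradicting acyclicity. The common-multiple condition is established by a similar but more delicate argument on how left and right chains constrain one another inside a minimal diagram. Cancellativity together with the common-multiple property is exactly Ore's classical hypothesis, and so the monoid embeds in its group of two-sided fractions.

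The main obstacle is the structural analysis of minimal diagrams: showing that a closed chain always admits a valid reduction without accidentally producing a new closed chain elsewhere requires a careful case analysis of how chains meet the boundary and how left and right graph constraints interact. This combinatorial core is where Adyan's original argument and Remmers's diagrammatic reformulation both expend the bulk of their technical effort; the passage from cancellativity plus common multiples to the embedding itself is then a standard application of Ore's theorem.
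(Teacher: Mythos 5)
The paper does not prove this statement at all: it is Adyan's 1960 embedding theorem, quoted with citations to Adyan and to the Nyberg-Brodda survey, so there is no internal argument to compare against. Judged on its own terms, your sketch is half right and half fatally wrong. The first half (semigroup/derivation diagrams in the style of Remmers, and the argument that acyclicity of the left and right graphs forces minimal diagrams to have no closed chains, whence the monoid is left and right cancellative) is indeed the standard modern route to Adyan's \emph{cancellativity} theorem.

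The gap is in your second step. Acyclicity of the left and right graphs does \emph{not} imply that two elements have a common right (or left) multiple, so the reduction to Ore's theorem cannot work. The simplest counterexample is the free monoid on two generators $a,b$: its presentation has no relations, so both graphs are edgeless and trivially cycle-free, yet $a$ and $b$ have no common right multiple; nevertheless the theorem holds for it (it embeds in the free group). More generally, cancellativity plus Ore's condition is far stronger than what cycle-free presentations give you, and Adyan's embedding theorem is precisely interesting because it applies to monoids that are not Ore. The actual proofs of the embedding statement (Adyan's original argument, and the later proofs surveyed by Nyberg-Brodda, e.g.\ via Stallings-type bipolar structures) show directly that the monoid embeds in the group defined by the \emph{same} presentation, and this requires genuinely different machinery than a group-of-fractions construction. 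So your outline establishes (modulo the diagram combinatorics you defer) cancellativity, but not embeddability, and the claimed ``more delicate argument'' for common multiples cannot exist.
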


\begin{corollary} \label{cor:ptran}
There exists a semigroup that is embeddable in a group for which $\sim_p$ is not transitive.
\end{corollary}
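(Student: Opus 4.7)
The plan is to invoke Adyan's Theorem~\ref{thm:embedding} on a small one-relator monoid. Specifically, I would set
$$M := \langle a, b, c, d \mid ab = cd \rangle.$$
Reading off the presentation, the left graph of $M$ has vertex set $\{a,b,c,d\}$ with the single edge $\{a,c\}$, and the right graph is the same vertex set with the single edge $\{b,d\}$. Both graphs are trees, hence acyclic, so Theorem~\ref{thm:embedding} applies and $M$ embeds in a group.

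To witness non-transitivity I would use the three elements $ba$, $ab$, $dc$ of $M$. Two of the $\sim_p$-links are immediate from the definition: $ba \sim_p ab$ via $u = b$, $v = a$, since $uv = ba$ and $vu = ab$; and $ab \sim_p dc$ via $u = c$, $v = d$, since the defining relation gives $uv = cd = ab$ while $vu = dc$. The content of the corollary thus reduces to the non-relation $ba \not\sim_p dc$ in $M$.

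The main obstacle is this last step, and the plan is to exploit how little the single length-preserving relation can rewrite a word. Two words over $\{a,b,c,d\}^{*}$ represent the same element of $M$ if and only if they are connected by a finite sequence of substitutions replacing a contiguous $ab$ by $cd$ or vice versa. The word $ba$ contains neither $ab$ nor $cd$ as a contiguous subword, so its equivalence class in the free monoid is the singleton $\{ba\}$. Any factorization $uv = ba$ in $M$ therefore lifts to an honest string factorization of the word $ba$, forcing $(u, v)$ to be one of $(1, ba)$, $(b, a)$, $(ba, 1)$. Direct computation of $vu$ in each case yields $ba$, $ab$, and $ba$ respectively. Since the classes of $ba$, $ab$, and $dc$ in $M$ are the pairwise distinct sets $\{ba\}$, $\{ab, cd\}$, and $\{dc\}$, none of these candidates for $vu$ equals $dc$. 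Hence $ba \not\sim_p dc$, which together with the two $\sim_p$-links above exhibits the required failure of transitivity and proves the corollary.
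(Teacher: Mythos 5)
Your proof is correct and follows essentially the same route as the paper: apply Adyan's embedding theorem (Theorem~\ref{thm:embedding}) to a one-relator monoid whose left and right graphs are acyclic, then exhibit three elements witnessing the failure of transitivity (the paper uses $\langle a,b \mid aab=bba\rangle$ with $aba \sim_p aab = bba \sim_p bab$ but $aba \not\sim_p bab$). Your presentation $\langle a,b,c,d \mid ab=cd\rangle$ works just as well, and your singleton-equivalence-class argument actually spells out the non-conjugacy step ($ba \not\sim_p dc$) that the paper leaves as a bare assertion.
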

\begin{proof}
Let $S:=\langle a,b \, | \, aab=bba\rangle$. Both the left and right graphs of $S$ contain one edge between distinct points, so neither graph contains cycles. Thus, by Theorem~\ref{thm:embedding}, $S$ can be embedded in a group. Note also that $aba \sim_p aab$, $bba \sim_p bab$, and $aba \not \sim_p bab$. Since $aab=bba$, then $\sim_p$ is not transitive for $S$.
\end{proof}

Problem 6.22 from \cite{AK:FN} asks if $\sim_p$ must be transitive for a semigroup if it is transitive for each of the semigroup's variants. A \emph{variant} of semigroup $(S,\cdot)$ at $a \in S$ is defined as $(S,\circ)$ where for every $x,y \in S$, $x \circ y := x \cdot a \cdot y$. We now present an example of a semigroup for which $\sim_p$ is not transitive even though, for each of the semigroup's variants, $\sim_p$ is transitive.

\begin{example}
$\emph{SmallSemigroup}(8, 1843112331)$ from \cite{DM:GAP} with multiplication table:
\begin{center}
\begin{tabular}{ c | c | c | c | c | c | c | c | c |}
$\cdot$ & 0 & 1 & 2 & 3 & 4 & 5 & 6 & 7\\
\hline
0 & 0 & 0 & 0 & 0 & 0 & 0 & 0 & 0\\
\hline
1 & 0 & 0 & 0 & 0 & 0 & 0 & 0 & 0\\
\hline
2 & 0 & 0 & 0 & 0 & 0 & 0 & 0 & 0\\
\hline
3 & 0 & 0 & 0 & 0 & 0 & 0 & 0 & 0\\
\hline
4 & 0 & 0 & 0 & 0 & 0 & 0 & 0 & 0\\
\hline
5 & 0 & 0 & 0 & 0 & 0 & 0 & 1 & 1\\
\hline
6 & 0 & 0 & 0 & 0 & 0 & 2 & 0 & 3\\
\hline
7 & 0 & 0 & 0 & 0 & 0 & 4 & 3 & 0\\
\hline
\end{tabular}
\end{center}
\end{example}
This example is generated by $\{5,6,7\}$ with the following properties: (1) $5 \cdot 6 = 5 \cdot 7$, (2) $6 \cdot 5 \not \sim_p 7 \cdot 5$, and (3) any word of length at least 3 equals $0$. (1) and (2) imply $\sim_p$ is not transitive for $S$, but (3) implies $\sim_p$ is transitive for every variant.

\section{Noncancellative Quotient Example}
A relation $\sim$ on a semigroup $S$ is a \emph{congruence} if for every $a,b,c,d \in S$, $a \sim b$ and $c \sim d$ implies that $ac \sim bd$. Problem 6.16 from \cite{AK:FN} asks whether there exists an example of a semigroup $S$ for which $\sim_o$ is a congruence and $S/\sim_o$ is not cancellative. The following is such an example.
\begin{example}
Let $S:=\{a,b \, | \, a^2=1, \, ab = b\} = \{1,b,b^2,\dots,a,ba,b^2a,\dots\}$.
\end{example}
We first claim that the following are the $\sim_o$-conjugacy classes:
$$\{\{1\},\{a\}\} \cup \{ \{b^i,b^ia\}\}_{i \in \mathbb{N}}.$$

For each $i \in \mathbb{N}$, $b^i \sim_o b^ia$ since $b^i(ba) = b^{i+1}a = (ba)b^ia$ and $(b)b^i = b^{i+1} = b^ia(b)$. For any distinct $i,j \in \{0,1,\dots\}$, we want to show that $b^i \not \sim_o b^j$. Note that, for any $k \in \{0,1,\dots\}$ and $\ell \in \{0,1\}$, $b^i(b^ka^\ell) = b^{i+k}a^\ell \neq b^{j+k} = (b^ka^\ell)b^j$. Also, $b^ka^\ell a = b^ka^{1-\ell} \neq b^ka^\ell = 1b^ka^\ell$, so $a \not \sim_o 1$. To prove $\sim_o$ is a congruence, pick any two pairs of congruent elements $b^ia^k \sim_o b^ia^\ell$ and $b^ja^m \sim_o b^ja^n$. Then $b^ia^kb^ja^m = b^{i+j}a^m \sim_o b^{i+j}a^n = b^ia^\ell b^ja^n$. We conclude by noting that $ab = b = 1b$, which shows that $S / \sim_o$ is not right-cancelative.

\section{Conjugacy Chain Problem}
Figure 1.1 from \cite{AK:FN} depicts a lattice of conjugacies in which order is defined by inclusion. Problem 6.21 from \cite{AK:FN} asks whether there exists an infinite set of first-order definable conjugacies that forms (a) an antichain or (b) a chain. We now construct an infinite chain of first-order definable conjugacies. First, we need the following notation.

The \emph{full partial bijection semigroup} over $[n]:=\{1,\dots,n\}$, denoted $I_n$, is the set of all bijections defined on subsets of $[n]$, together with function composition. Denote the domain and image of $a \in I_n$ as $\dom(a)$ and $[n]a$, respectively. Every element $s \in I_n$ can be characterized by the orbits of its action on $[n]$, each of which is one of two types. A \emph{cycle of length k} is an ordered subset of $[n]$, denoted $(x_1,\dots,x_k)$, such that $x_1 = \min(x_1,\dots,x_k)$, $x_is = x_{i+1}$ for $1 \leq i < k$, and $x_ks = x_1$. Let $\Delta_s$ be the set of all cycles and $\Delta_s^k$ be the set of all cycles of length $k$. By this convenction, $\Delta_s^1 := \{{x} : xs = x\}$.

A \emph{chain of length k} is an ordered set denoted $[x_1,\dots,x_k]$ such that $x_1 \not \in [n]s$, $x_is = x_{i+1}$ for $1 \leq i < k$, and $x_k \not \in \dom(s)$. Let $\Theta_s$ be the set of all chains, $\Theta_s^k$ be the set of all chains of length $k$, and $\Theta_s^{>k}$ be the set of all chains of length greater than $k$. By this convention, $\Theta_s^1 := \{\{x\}:x \not \in \dom(s)\}$. For $s \in I_n$, its \emph{cycle type} and \emph{chain type} are the sequences $\langle |\Delta_s^1|,\dots,|\Delta_s^n|\rangle$ and $\langle |\Theta_s^1|,\dots,|\Theta_s^n|\rangle$, respectively. Note that for any $s \in S$, each $x \in [n]$ lies in either a cycle or a chain of $s$.

For any inverse semigroup elements $a,b \in S$, $a \sim_n b$ iff there exists $g \in S^1$ such that $g^{-1}ag=b$ and $gbg^{-1} = a$ \cite[Thm 2.6]{JK:ND}. For any $a,b \in I_n$, it is also known that: (1) $a \sim_n b$ iff $a$ and $b$ have the same cycle-chain type \cite[Thm 2.6]{JK:ND}\cite[Thm 2.10]{AK:CI}; (2) $\sim_{tr} \, = \, \sim_{p^*}$ \cite[Thm 2]{KM:TA}; and (3) $a \sim_{p^*} b$ iff $a$ and $b$ have the same cycle type \cite[Thm 10]{KM:OC}. Note that $a^\omega$ is the idempotent that fixes the points in the cycles of $a$ and omits the points in the chains of $a$. Thus, in the the full inverse semigroup, $a \sim_{tr} b$ iff $a^{\omega+1} \sim_n b^{\omega+1}$, since both relations are characterized by $a$ and $b$ have the same cycle types.

Motivated by this result, for each $k \in \mathbb{N}$, we define the relation $a \sim_{n[k]} b$ as follows. For any $S \leq I_n$ and any $a,b \in S$, $a \sim_{n[k]} b$ iff $a^{k+1}a^{-k} \sim_n b^{k+1}b^{-k}$. That $\sim_{n[k]}$ is an equivalence relation follows immediately from $\sim_n$ being an equivalence relation. Since $a^{k+1}a^{-k} = a$ for all group elements $a$, then $\sim_{n[k]} \, = \, \sim_n \, = \, \sim_g$ when $S$ is a group. We want to prove that $\{\sim_{n[k]}\}_{k \in \mathbb{N}}$ is an infinite chain.

\begin{theorem}
For any $i,j \in \mathbb{N}$, $i<j$ implies $\sim_{n[i]} \subsetneq \sim_{n[j]}$.
\end{theorem}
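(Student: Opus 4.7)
The plan is to reduce both halves of the claim to a single structural fact about how the operator $c_k(a) := a^{k+1} a^{-k}$ acts on the cycle-chain decomposition of an element $a \in I_n$.

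First I would establish, by an orbit-by-orbit calculation tracking which $x_m$ survive in $\dom(a^{k+1})$, that $c_k(a)$ acts as $a$ on each cycle of $a$, converts each chain $[x_1, \ldots, x_L]$ of $a$ with $L \geq k+2$ into the chain $[x_1, \ldots, x_{L-k}]$, and dissolves each chain of length $L \leq k+1$ into $L$ singleton chains. Iterating this description yields the identity $c_m(c_k(a)) = c_{k+m}(a)$ for all $k, m \geq 0$, since both sides agree on the cycles of $a$ and act identically on its chains: a chain of length $L$ produces a chain of length $L - k - m$ when $L \geq k+m+2$ and singleton chains otherwise.

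For the inclusion $\sim_{n[i]} \subseteq \sim_{n[j]}$, suppose $a \sim_{n[i]} b$, so that $c_i(a) \sim_n c_i(b)$. Since $\sim_n$ on $I_n$ is characterised by equality of cycle-chain type, $c_i(a)$ and $c_i(b)$ have identical cycle-chain types. The structural description makes the cycle-chain type of $c_{j-i}(x)$ a function of that of $x$ alone, so $c_{j-i}(c_i(a))$ and $c_{j-i}(c_i(b))$ likewise share cycle-chain types; by the identity they equal $c_j(a)$ and $c_j(b)$, forcing $c_j(a) \sim_n c_j(b)$, i.e.\ $a \sim_{n[j]} b$.

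For strict inclusion I would exhibit a separating pair in $I_{i+2}$: let $a$ be the single chain $1 \mapsto 2 \mapsto \cdots \mapsto i+2$ and let $b$ be the empty partial bijection. The structural lemma gives $c_i(a)$ a chain of length $2$ together with $i$ singleton chains, while $c_i(b)$ consists of $i+2$ singleton chains; their cycle-chain types differ, so $a \not\sim_{n[i]} b$. Because $i < j$ forces $i+2 \leq j+1$, the lemma collapses the chain of $a$ entirely and gives $c_j(a) = \emptyset = c_j(b)$, whence $a \sim_{n[j]} b$. The main obstacle is the orbit accounting in the structural lemma; once the off-by-one bookkeeping for $\dom(a^{k+1})$ is settled, both the inclusion and the separating example follow by direct application.
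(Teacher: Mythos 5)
Your structural lemma (cycles preserved, a chain of length $L$ becoming a chain of length $L-k$ when $L\geq k+2$ and dissolving into singletons otherwise), the composition identity $c_m(c_k(a))=c_{k+m}(a)$ for $c_k(a):=a^{k+1}a^{-k}$, and your separating pair are all correct; in fact the chain of length $i+2$ in $I_{i+2}$ handles every case $i<j$ uniformly (including $j=i+1$), and your identity is exactly the algebra the paper's displayed computation is aiming at, with the exponents $j-i+1$ and $j-i$ rather than the $j+1$ and $j$ that appear there. The genuine gap is in the inclusion half. The relation $\sim_{n[k]}$ is defined for every $S\leq I_n$, and the paper's notion of containment between conjugacies quantifies over all semigroups on which both relations are defined; this generality is the whole point, since it is on subgroups of $I_n$ that $\sim_{n[k]}$ reduces to $\sim_g$ and hence is a conjugacy. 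Your argument deduces $c_j(a)\sim_n c_j(b)$ from equality of cycle-chain types, but the characterisation of $\sim_n$ by cycle-chain type holds only in the full monoid $I_n$. In a proper inverse subsemigroup $S$, $\sim_n$ requires a witness $g\in S^1$ with $g^{-1}xg=y$ and $gyg^{-1}=x$, and equal cycle-chain type is necessary but not sufficient: in the cyclic subgroup of $I_3$ generated by a $3$-cycle, the two nonidentity elements have the same cycle-chain type yet are not $\sim_n$-related within that subgroup. So as written you have proved $\sim_{n[i]}\subseteq\sim_{n[j]}$ only for $S=I_n$, not for the class of semigroups on which the relations are defined.

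The paper closes exactly this point by transferring the witness rather than the type data: if $g\in S^1$ witnesses $c_i(a)\sim_n c_i(b)$, then, using the identities $g^{-1}c_i(a)=c_i(b)g^{-1}$, $c_i(a)g=gc_i(b)$, and $c_i(b)g^{-1}g=c_i(b)$ from Proposition 1.3 of the inverse-semigroup conjugacy paper it cites, the same $g$ witnesses $c_{j-i}(c_i(a))\sim_n c_{j-i}(c_i(b))$, which by your identity equals the pair $c_j(a)$, $c_j(b)$. Your framework is repaired by adding precisely this one algebraic step (a fixed witness $g$ for $x\sim_n y$ remains a witness for $c_m(x)\sim_n c_m(y)$); with that in place, your structural lemma and your example give a complete and, on the strictness side, somewhat cleaner proof than the paper's.
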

\begin{proof}
Suppose $a \sim_{n[i]} b$ and let $g \in S^1$ satisfy $g^{-1} a^{i+1} a^{-i} g = b^{i+1} b^{-i}$ and $a^{i+1} a^{-i} = g b^{i+1} b^{-i} g^{-1}$. By \cite[Prop 1.3]{AK:CI}, $g^{-1} a^{i+1} a^{-i} = b^{i+1}b^{-i} g^{-1}$, $a^{i+1} a^{-i} g = g b^{i+1} b^{-i}$, and $b^{i+1}b^{-i}g^{-1}g = b^{i+1}b^{-i}$. Then because $i<j$:
\begin{align*}
g^{-1} a^{j+1}a^{-j} g &= g^{-1} (a^{i+1}a^{-i})^{j+1}(a^ia^{-i-1})^j g\\
&= (b^{i+1}b^{-i})^{j+1}g^{-1}g(b^ib^{-i-1})^j\\
&= b^{j+1}b^{-j}
\end{align*}
Similarly, $g b^{j+1}b^{-j} g^{-1} = a^{j+1}a^{-j}$. Thus, $a \sim_{n[j]} b$.

Define $a \in I_j$ as follows: $xa = x+1$ for $x \in \{1,\dots,j-1\}$ and $j \not \in \dom(a)$. Thus, $a$ has a single chain of length $j$. Define $b \in I_j$ to have empty domain. Then $xa^i = x+i+1$ for $x \in \{1,\dots,j-i-1\}$ and $a^j = b^i = b^j = b$. Since $a^j$ and $b^j$ have the same cycle-chain type, $a \sim_{n[j]} b$. Since $a^i$ and $b^i$ do not have the same cycle-chain type, $a \not \sim_{n[i]} b$.
\end{proof}

Thus, $\{\sim_{n[k]}\}_{k \in \mathbb{N}}$ is an infinite chain of conjugacies. Consider the following set of problems, indexed by $k \in \mathbb{N}$:

\medskip
{\bf n[k]-Conjugacy}
\begin{itemize}
\item Input: $a,b \in I_n$
\item Problem: Is $a \sim_{n[k]} b$?
\end{itemize}

\begin{theorem}
For each $k \in \mathbb{N}$, n[k]-Conjugacy is $\LOGSPACE$-complete.
\end{theorem}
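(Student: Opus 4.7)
The plan is to establish $\LOGSPACE$-membership and $\LOGSPACE$-hardness separately. For membership, I would exploit the cycle-chain characterization of $\sim_n$ on $I_n$ recalled just before the theorem: $a \sim_{n[k]} b$ iff $a^{k+1}a^{-k}$ and $b^{k+1}b^{-k}$ have the same cycle-chain type. A direct calculation shows how the operation $s \mapsto s^{k+1}s^{-k}$ acts on cycle-chain types: cycles of $s$ are preserved verbatim, every chain of length $m \geq k+2$ becomes one chain of length $m-k$ together with $k$ singletons, and every chain of length $m \leq k+1$ splinters into $m$ singletons. Computing the cycle-chain type of any $s \in I_n$ can be done in $\LOGSPACE$: for each $x \in [n]$, walk forward and backward along $s$ with only an $O(\log n)$-bit pointer to determine the orbit's type, length, and canonical representative, and count each orbit once at its representative. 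Applying the formulas above and comparing the two resulting tallies keeps the whole procedure in $\LOGSPACE$.

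For hardness I would proceed in two stages. First, I would establish the base case $k = 0$, where the problem is whether two partial bijections have the same cycle-chain type---equivalently, isomorphism of disjoint unions of directed paths and cycles. I would reduce from a classical $\LOGSPACE$-complete problem such as ORD (given a linked list and two marked elements, does one precede the other?) by using the positions of the marked elements to control the lengths of a small, fixed collection of chains in the constructed instances, so that the cycle-chain types agree precisely in the yes case. Second, for $k \geq 1$ I would reduce the $k = 0$ case to the general case by padding. Given $a \in I_n$ with $T_a$ chains, let $a' \in I_{n + k T_a}$ be the partial bijection obtained by appending $k$ fresh points to the end of each chain of $a$ (and define $b'$ analogously). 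The chain formulas above yield
\begin{align*}
|\Delta_{a'^{k+1}a'^{-k}}^j| &= |\Delta_a^j| \text{ for all } j, \\
|\Theta_{a'^{k+1}a'^{-k}}^j| &= |\Theta_a^j| \text{ for all } j \geq 2, \\
|\Theta_{a'^{k+1}a'^{-k}}^1| &= |\Theta_a^1| + k \, T_a,
\end{align*}
and similarly for $b'$. Matching the length-$\geq 2$ data forces $|\Theta_a^j| = |\Theta_b^j|$ for all $j \geq 2$, and then the length-$1$ equation combined with $T_a = |\Theta_a^1| + \sum_{j \geq 2} |\Theta_a^j|$ (and likewise for $b$) forces $|\Theta_a^1| = |\Theta_b^1|$. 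Hence $a' \sim_{n[k]} b'$ iff $a \sim_{n[0]} b$, and the padding is log-space computable.

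The main obstacle is the base-case hardness reduction, since the upper bound and the padding step are routine. Turning an arbitrary $\LOGSPACE$ computation into a cycle-chain-type comparison between two partial bijections requires a careful combinatorial encoding; I expect this to be manageable by adapting standard techniques used to show $\LOGSPACE$-hardness of isomorphism problems on $1$-regular directed graphs.
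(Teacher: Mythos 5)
Your membership argument is sound and is essentially the paper's: reduce $\sim_{n[k]}$ to a comparison of cycle types together with the tails of the chain types, and compute these by walking orbits with an $O(\log n)$-bit pointer. Your analysis of how $s\mapsto s^{k+1}s^{-k}$ acts on chains (length $m\ge k+2$ becomes one chain of length $m-k$ plus $k$ singletons; length $m\le k+1$ becomes $m$ singletons) is correct and in fact more precise than the paper's statement, which compares $\langle|\Theta_a^k|,\dots,|\Theta_a^n|\rangle$ with $\langle|\Theta_b^k|,\dots,|\Theta_b^n|\rangle$ without accounting for the induced singletons. The padding step reducing the $k=0$ case to general $k$ also checks out.

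The genuine gap is in hardness: the entire content of your lower bound is the base-case reduction from ORD to cycle-chain-type comparison, and that is exactly the step you leave as ``I expect this to be manageable.'' Without it you have proved nothing about hardness, since the padding step only transports a lower bound you have not yet established. Moreover, this detour is unnecessary. The paper's route is to observe that $\sim_{n[k]}$ is a conjugacy, i.e.\ it restricts to ordinary group conjugacy on any group: for a permutation $a$ one has $a^{k+1}a^{-k}=a$ and there are no chains, so for $a,b$ in a copy of a symmetric group inside $I_n$, $a\sim_{n[k]}b$ iff $a$ and $b$ have the same cycle type. Since testing conjugacy of permutations is already known to be $\LOGSPACE$-hard (\cite[Thm 4.6]{TJ:CI}), the identity map is the reduction, and both your base-case construction and your padding become superfluous. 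If you insist on a self-contained proof you must actually carry out the ORD reduction; as written, the hardness half of your argument is a plan, not a proof.
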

\begin{proof}
We first claim that, for any $a,b \in I_n$, $a \sim_{n[k]} b$ iff $a$ and $b$ have the same cycle type and $\langle |\Theta_a^k|,\dots,|\Theta_a^n|\rangle = \langle |\Theta_b^k|,\dots,|\Theta_b^n|\rangle$. Let $a' = a^{k+1}a^{-k}$ and $b' = b^{k+1}b^{-k}$. Certainly, $xa = xa'$ for every point $x$ in a cycle of $a$. So, $\Delta_a = \Delta_{a'}$ and $\Delta_{b'} = \Delta_b$ and thus $|\Delta_a| = |\Delta_b|$ iff $|\Delta_{a'}| = |\Delta_{b'}|$. Let $[x_1,\dots,x_i] \in \Theta_a^i$ such that $i\geq k$. Then $[x_1,\dots,x_{i-k}] \in \Theta_{a'}^{i-k}$. That is, $|\Theta_a^i| = |\Theta_{a'}^{i-k}|$ and $|\Theta_{b'}^{i-k}| = |\Theta_b^i|$. So, for $i \geq k$, $|\Theta_a^i|=|\Theta_b^i|$ iff $|\Theta_{a'}^{i-k}| = |\Theta_{b'}^{i-k}|$. Our claim then follows from the fact that $a' \sim_n b'$ iff they have the same cycle type and chain type.\cite[Thm 2.6]{JK:ND}\cite[Thm 2.10]{AK:CI}

Algorithms 1 and 2 from \cite{TJ:CI} describe how to deterministically check that two elements have the same cycle and chain types using at most logarithmic space. We can adapt them to our current problem by simply ignoring chains of length less than $k$. The proof for \cite[Thm 4.6]{TJ:CI} shows that testing conjugacy within groups is $\LOGSPACE$-hard, so testing any notion of semigroup conjugacy is $\LOGSPACE$-hard.
\end{proof}

\section{\texorpdfstring{Extending $\sim_p$}{Extending ~p}}
Our first attempt to answer Problem 6.21 relied on extending the definition of $\sim_p$. Although it did not yield a complete answer, it did yield a new class of $\LOGSPACE$-complete problems.

For a semigroup $S$, define $\sim_{p[1]} := \sim_p$ and $\sim_{p[m]} := \{ (a,b) \in S^2: \exists c \in S((a \sim_{p[m-1]} c) \land (c \sim_{p[m-1]} b))$ for every integer $m>1$. Certainly, for every semigroup and every pair of positive integers $m<n$, $\sim_p \subseteq \, \sim_{p[m]} \, \subseteq \, \sim_{p[n]} \, \subseteq \, \sim_{p^*}$. So, when the semigroup is a group, $\sim_{p[m]} = \sim_g$ for every positive integer $m$. To prove the set is an infinite chain, we want to define a semigroup for which $\sim_{p[m]} \, \subsetneq \, \sim_{p[n]}$ for each pair of positive integers $m < n$. To do this, we will need the following notation.

For any $a,b \in I_n$, let $\phi: \Theta_a^{>{2^{m-1}}} \rightarrow \Theta_b$ and $\psi: \Theta_b^{>2^{m-1}} \rightarrow \Theta_a$ be injective mappings. We call $(\phi,\psi)$ an \emph{inversive m-satisfying pair for $a$ and $b$} if for every $\theta \in \Theta_a^{>2^{m-1}}$ and every $\theta' \in \Theta_b^{>2^{m-1}}$: (1) $|\theta| \leq |\phi(\theta)|+2^{m-1}$; (2) $|\theta'| \leq |\psi(\theta')|+2^{m-1}$; and (3) $\phi \circ \psi$ and $\psi \circ \phi$ are both idempotent. Lemma 4.4 from \cite{TJ:CI} with Theorem 10 from \cite{KM:OC} gives us that, for any $a,b \in I_n$, $a \sim_p b$ iff $a \sim_{p^*} b$ and there exists an inversive 1-satisfying pair for $a$ and $b$. We now extend this result to $\sim_{p[m]}$. 

\begin{lemma} \label{lem:invPair}
For any $a,b \in I_n$, $ a \sim_{p[m]} b$ iff: $a \sim_{p^*} b$ and there exists an inversive m-satisfying pair for a and b.
\end{lemma}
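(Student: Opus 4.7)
The plan is to induct on $m$, with the base case $m=1$ being the combination of Lemma 4.4 of \cite{TJ:CI} and Theorem 10 of \cite{KM:OC} recorded just above. Assume the lemma for $m-1 \geq 1$ and fix $a,b \in I_n$.

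Forward direction: suppose $a \sim_{p[m]} b$, so there is $c \in I_n$ with $a \sim_{p[m-1]} c$ and $c \sim_{p[m-1]} b$. By the inductive hypothesis, both give $\sim_{p^*}$, so $a \sim_{p^*} b$ by transitivity of $\sim_{p^*}$; moreover there exist inversive $(m-1)$-satisfying pairs $(\phi_1,\psi_1)$ for $(a,c)$ and $(\phi_2,\psi_2)$ for $(c,b)$. I set $\phi := \phi_2 \circ \phi_1$ on $\Theta_a^{>2^{m-1}}$ and $\psi := \psi_1 \circ \psi_2$ on $\Theta_b^{>2^{m-1}}$. For $\theta \in \Theta_a^{>2^{m-1}}$, condition (1) of $(\phi_1,\psi_1)$ gives $|\phi_1(\theta)| \geq |\theta|-2^{m-2} > 2^{m-2}$, so $\phi_1(\theta) \in \Theta_c^{>2^{m-2}}$ and $\phi_2$ applies; iterating, $|\phi(\theta)| \geq |\theta|-2^{m-1}$, verifying (1) for $(\phi,\psi)$. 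Injectivity is preserved by composition. For the idempotent condition I use the key observation that, for injective $\phi,\psi$, ``$\phi\circ\psi$ idempotent'' is equivalent to $\phi\circ\psi$ being the identity wherever it is defined (apply injectivity to cancel one copy of each); this mutual-inverse property at each $(\phi_i,\psi_i)$ propagates through the composition to $(\phi,\psi)$.

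Converse: suppose $a \sim_{p^*} b$ and $(\phi,\psi)$ is an inversive $m$-satisfying pair for $(a,b)$. I construct $c \in I_n$ with $a \sim_{p[m-1]} c$ and $c \sim_{p[m-1]} b$ by interpolating chain lengths. Give $c$ the common cycle type of $a$ and $b$. Condition (1) gives $|\theta|-|\phi(\theta)| \leq 2^{m-1}$; condition (2) together with the mutual-inverse reading of (3) gives the reverse inequality whenever $\phi(\theta) \in \Theta_b^{>2^{m-1}}$, and it holds trivially otherwise since then $|\phi(\theta)| \leq 2^{m-1} < |\theta|$. Thus $\bigl||\theta|-|\phi(\theta)|\bigr| \leq 2^{m-1}$ for every $\theta \in \Theta_a^{>2^{m-1}}$, so the interval $[\max(|\theta|,|\phi(\theta)|)-2^{m-2},\,\min(|\theta|,|\phi(\theta)|)+2^{m-2}]$ is nonempty; choose an integer $\ell_\theta$ in it and place a chain of length $\ell_\theta$ in $c$ to act as an intermediate between $\theta$ and $\phi(\theta)$. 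Handle long chains $\theta' \in \Theta_b^{>2^{m-1}}$ not in the image of $\phi$ symmetrically via $\psi$; place a chain of length equal to itself in $c$ for each intermediate chain (length in $(2^{m-2},2^{m-1}]$) of $a$ and of $b$; pad with length-$1$ chains to bring the total point count of $c$ to $n$ (possible since $a \sim_{p^*} b$ equates cycle-point counts and hence total chain-point counts). The induced maps $(\phi_1,\psi_1)$ from $a$ to $c$ and $(\phi_2,\psi_2)$ from $c$ to $b$ satisfy the $(m-1)$-satisfying conditions by the choice of $\ell_\theta$, and by construction $\phi = \phi_2 \circ \phi_1$ and $\psi = \psi_1 \circ \psi_2$.

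The main obstacle is the bookkeeping for the converse. One must simultaneously manage (i) long chains matched by $(\phi,\psi)$, (ii) intermediate chains of length in $(2^{m-2},2^{m-1}]$ which the inductive pairs ``see'' but $(\phi,\psi)$ does not, and (iii) residual short chains invisible to either relation yet counting toward $n$. The length-interval feasibility $\bigl||\theta|-|\phi(\theta)|\bigr| \leq 2^{m-1}$ together with the mutual-inverse interpretation of the idempotent condition are the two structural facts that make the construction go through, and care is required to verify that the interpolated chains serve consistently both as targets of $\phi_2$ and sources of $\phi_1$ (and dually for $\psi_1,\psi_2$) so that the three required conditions hold on both sides at once.
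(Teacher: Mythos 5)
Your forward direction matches the paper's argument (compose the two inductive pairs, use that an injective idempotent partial map is the identity on its domain), and you even supply a check the paper omits, namely that $\phi_1(\theta)\in\Theta_c^{>2^{m-2}}$ so that $\phi_2$ is applicable. The converse, however, has a genuine gap in how you treat the ``intermediate'' chains. You propose to place, for each chain of $a$ or $b$ of length in $(2^{m-2},2^{m-1}]$ that is invisible to $(\phi,\psi)$, a chain \emph{of the same length} into $c$. But such a copy has length $>2^{m-2}$, so it lies in $\Theta_c^{>2^{m-2}}$ and therefore must itself be assigned images under $\phi_2$ and $\psi_1$, injectively and subject to the length conditions --- obligations that need not be satisfiable --- and the copies also inflate the point count of $c$. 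Concretely, take $m=2$, $n=4$, let $a$ have two chains of length $2$ and $b$ one chain of length $4$ (no cycles, so $a\sim_{p^*}b$, and a $2$-satisfying pair exists with $\dom(\phi)=\emptyset$ and $\psi$ sending $b$'s chain to one chain of $a$). Your first stage produces a $c$-chain of length $3$; copying the second chain of $a$ at length $2$ gives $c$ five points in a ground set of four, and the length-$2$ copy needs a $\phi_2$-image in $\Theta_b$ of length at least $1$ distinct from the image of the length-$3$ chain, while $b$ has only one chain. The paper avoids both problems by \emph{halving} each leftover chain: the corresponding $c$-chain gets length $\lfloor|\theta|/2\rfloor\leq 2^{m-2}$, which still satisfies $|\theta|\leq\lfloor|\theta|/2\rfloor+2^{m-2}$ but falls at or below the threshold, so it is not in the domain of $\psi_1$ or $\phi_2$ and imposes no further conditions; the halving also keeps the total length of $\Theta_c$ at most $n$. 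Relatedly, in your first stage you should take $\ell_\theta=\lfloor(|\theta|+|\phi(\theta)|)/2\rfloor$ (which does lie in your interval) rather than an arbitrary point of it, or the total point count of $c$ need not be bounded by $n$. The rest of your converse --- the feasibility bound $\bigl||\theta|-|\phi(\theta)|\bigr|\leq 2^{m-1}$ derived from conditions (1)--(3) and the interpolation idea --- is exactly the paper's.
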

\begin{proof}
We proceed by induction on $m$. Case $m=1$ is given by \cite[Lemma 4.4]{TJ:CI}. Assume that Lemma~\ref{lem:invPair} holds for positive integers $i<m$. If $a \sim_{p[m+1]} b$, then there exists $c \in I_n$ such that $a \sim_{p[m]} c$ and $c \sim_{p[m]} b$. By our induction hypothesis, $a \sim_{p^*} c$ and $c \sim_{p^*} b$, so $a \sim_{p^*} b$. Furthermore, there exists an inversive m-satisfying pairs $(\phi_a,\psi_a)$ for $a$ and $c$ and $(\phi_c,\psi_c)$ for $c$ and $b$. We claim that $(\phi_c \circ \phi_a, \psi_a \circ \psi_c)$, restricted to $\Theta_a^{>2^m}$ and $\Theta_b^{>2^m}$, is an inversive (m+1)-satisfying pair for $a$ and $b$.

Pick any $\theta \in \Theta_a^{>2^m}$ and any $\theta' \in \Theta_b^{2^m}$. Then $|\theta| \leq |\phi_a(\theta)|+2^{m-1} \leq |\phi_c(\phi_a(\theta))|+2^{m-1}+2^{m-1}=|\phi_c(\phi_a(\theta))|+2^m$. Likewise, $|\theta'| \leq |\psi_a(\psi_c(\theta'))|+2^m$. Certainly, $\phi_c \circ \phi_a$ and $\psi_a \circ \psi_c$ are both injective. Since $\phi_a \circ \psi_a$ and $\phi_c \circ \psi_c$ are both injective and idempotent, they must fix every point in their respective domains. Consequently, $(\phi_c \circ \phi_a \circ \psi_a \circ \psi_c)(\theta) = \theta$ for every $\theta \in \dom(\psi_a \circ \psi_c)$, meaning $(\phi_c \circ \phi_a \circ \psi_a \circ \psi_c)$ is injective and idempotent. Likewise, $\psi_a \circ \psi_c \circ \phi_c \circ \phi_a$ is injective and idempotent. Thus, $(\phi_c \circ \phi_a, \psi_a \circ \psi_c)$ is an inversive (m+1)-satisfying pair for $a$ and $b$.

Conversely, suppose $a \sim_{p^*} b$ and let $(\phi,\psi)$ be an inversive (m+1)-satisfying pair for $a$ and $b$. We will construct $c \in I_n$ and inversive m-satisfying pairs $(\phi_a,\psi_a)$ for $a$ and $c$ and $(\phi_c,\psi_c)$ for $c$ and $b$. By our induction hypothesis, this will prove $a \sim_{p[m]} c$ and $c \sim_{p[m]} b$, thus proving $a \sim_{p[m+1]} b$. We will construct $c$ and the pairs in two stages.

For the first stage, define $\Theta_0$ to be the disjoint union of $\dom(\phi)$ and $\dom(\psi)$. For integers $0 \leq i$, while $\Theta_{i+1}$ is nonempty, define the following: $\Theta_{i+1} := \Theta_i \setminus \{\theta_i,\theta_i'\}$, where $\theta_i$ is any chain chosen from $\Theta_i$ and $\theta_i' = \phi(\theta_i)$ if $\theta \in \dom(\phi)$ or $\theta_i' = \psi(\theta)$ otherwise. Let $\Theta_\delta$ be the last nonempty set. Define $\alpha_{-1} = 0$ and, for each $0\leq i \leq \delta$,
\begin{equation*}
\alpha_i := \alpha_{i-1} +  \lfloor .5(|\theta_j|+|\theta_j'|)\rfloor.
\end{equation*}

Define $\Theta_c$ to include the following chains $\{[\alpha_{i-1}+1,\alpha_i]:0 \leq i \leq \delta\}$. For each integer $0 \leq i \leq \delta$, define $(\phi_a,\psi_a)$ and $(\phi_c,\psi_c)$ as follows. If $\theta_i \in \dom(\phi)$: $\phi_a(\theta_i) := [\alpha_{i-1},\alpha_i]$, $\phi_c([\alpha_{i-1},\alpha_i]) := \theta_i'$, $\psi_a([\alpha_{i-1},\alpha_i]) := \theta_i$, and $\psi_c(\theta_i') := [\alpha_{i-1},\alpha_i]$. If $\theta_i \in \dom(\psi)$: $\phi_a(\theta_i') := [\alpha_i,\alpha_{i+1}]$, $\phi_c([\alpha_{i-1},\alpha_i]) := \theta_i$, $\psi_a([\alpha_{i-1},\alpha_i]) := \theta_i'$, and $\psi_c(\theta_i) := [\alpha_{i-1},\alpha_i]$.

By construction, all four functions are injective and the following compositions are all idempotent: $\phi_a \circ \psi_a$, $\psi_a \circ \phi_a$, $\phi_c \circ \psi_c$, and $\psi_c \circ \phi_c$. Because $(\phi,\psi)$ is an inversive (m+1)-satisfying pair for $a$ and $b$, for every integer $0\leq i \leq \delta$, $|\theta_i| \leq |\theta_i'|+2^m$ and $|\theta_i'| \leq |\theta_i|+2^m$. That is, $||\theta_i| - |\theta_i'|| \leq 2^m$. Note that, for every $0 \leq i \leq \delta$, $|\theta_i| - (\alpha_i-\alpha_{i-1}) = \lceil .5(|\theta_i|-|\theta_i'|)\rceil$. Thus for every $\theta \in \dom(\phi_a)$, $|\theta| \leq |\phi_a(\theta)| + 2^{m-1}$ and for every $\theta \in \dom(\psi_a)$, $|\theta| \leq |\psi_a(\theta)| + 2^{m-1}$. By similar argument, $|\theta| \leq |\phi_c(\theta)| + 2^{m-1}$ for every $\theta \in \dom(\phi_c)$ and $|\theta| \leq |\psi_c(\theta)| + 2^{m-1}$ for every $\theta \in \dom(\psi_c)$.

We now finish defining $c$ and the pairs using the remaining chains: 
$$\Theta' := (\Theta_a^{>2^{m-1}} \cup \Theta_b^{>2^{m-1}}) \setminus \bigcup \limits_{0 \leq i < \delta} \{\theta_i,\theta_i'\}$$.

Enumerate $\Theta'$ as $\{\theta_{\delta+1},\dots,\theta_\varepsilon\}$. For each integer $\delta<i\leq \varepsilon$, define
\begin{equation*}
\alpha_i := \alpha_{i-1} \lfloor .5|\theta_j|\rfloor
\end{equation*}
Define $\Theta_c$ to include the chains $\{[\alpha_{i-1}+1,\alpha_i]: \delta < i \leq \varepsilon\}$. Note that $\alpha_\epsilon$ was calculated by adding chains from $\Theta_a^{>2^{n-1}}$ and $\Theta_b^{2^{n-1}}$ and dividing that sum by $2$. Since the lengths of the chains of $a$ and $b$ can, at most, add to be $2n$, then $\alpha_\epsilon \leq n$. For each integer $\delta<i\leq\varepsilon$, define $(\phi_a,\psi_a)$ and $(\phi_c,\psi_c)$ as follows. If $\theta_i \in \dom(\phi)$, $\phi_a(\theta_i) = [\alpha_{i-1},\alpha_i]$. If $\theta_i \in \dom(\psi)$, $\psi_c(\theta_i) = [\alpha_{i-1},\alpha_i]$. Note that each $\theta \in \Theta'$ has length at most $2^m$, so $\alpha_i-\alpha_{i-1} \leq 2^{m-1}$. That means $[\alpha_{i-1},\alpha_i] \not \in \Theta_c^{>2^{m-1}}$ and thus is not in the domain of $\psi_a$ nor $\phi_c$. Furthermore, for every integer $\delta<i\leq \varepsilon$, $|\theta_i| - (\alpha_i-\alpha_{i-1}) = \frac{1}{2}\theta_i \leq 2^{m-1}$, thus completing the proof that $(\phi_a,\psi_a)$ and $(\phi_c,\psi_c)$ are inversive m-satisfying pairs for $a$ and $c$ and for $c$ and $b$.
\end{proof}

Consider the following set of problems, indexed by $m \in \mathbb{N}$:

\medskip
{\bf p[m]-Conjugacy}
\begin{itemize}
\item Input: $a,b \in I_n$
\item Problem: Is $a \sim_{p[m]} b$?
\end{itemize}

Let $\LOGSPACE$ denote the complexity class of problems that can be solved deterministically using at most logarithmic space. The following problem is $\LOGSPACE$-complete: given $a,b \in I_n$, determine whether $a \sim_p b$ \cite[Theorem 4.6]{TJ:CI}. Lemma~\ref{lem:invPair} allows us to extend this result to p[m]-Conjugacy for every $m \in \mathbb{N}$.

\begin{theorem}
For each $m \in \mathbb{N}$, p[m]-Conjugacy is $\LOGSPACE$-complete.
\end{theorem}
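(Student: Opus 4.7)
The plan is to mirror the structure of the previous theorem's proof, using Lemma~\ref{lem:invPair} to reduce p[m]-Conjugacy to checking the cycle type together with a combinatorial matching condition on long chains, then verifying that both checks can be carried out in logarithmic space.

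For membership in $\LOGSPACE$, by Lemma~\ref{lem:invPair} we have $a \sim_{p[m]} b$ iff (i) $a \sim_{p^*} b$ and (ii) there exists an inversive $m$-satisfying pair for $a$ and $b$. Condition (i) is equivalent to $a$ and $b$ having the same cycle type by result (3) of the preamble, and Algorithm~1 of \cite{TJ:CI} verifies this in logarithmic space. For (ii), I will show that an inversive $m$-satisfying pair exists iff, for every integer $k > 2^{m-1}$,
\[
|\Theta_a^{>k}| \leq |\Theta_b^{>k-2^{m-1}}| \quad \text{and} \quad |\Theta_b^{>k}| \leq |\Theta_a^{>k-2^{m-1}}|.
\]
The forward direction follows from the injectivity of $\phi$ and $\psi$ combined with the length constraints in the definition. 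The reverse direction is produced by a greedy matching: process long chains in decreasing order of length and, at each step, pair the currently-longest unmatched chain of $a$ with the longest unmatched chain of $b$ whose length lies within the tolerance window $2^{m-1}$, and symmetrically. The Hall-type inequalities above guarantee such partners always exist, and injectivity together with the requirement that $\phi \circ \psi$ and $\psi \circ \phi$ be idempotent is automatic because the greedy $\phi$ and $\psi$ are mutual partial inverses on the image of the matching. Each of the $O(n)$ inequalities is an integer comparison between chain counts, and these counts can be obtained in logarithmic space by an adaptation of Algorithm~2 of \cite{TJ:CI} (read off $|\Theta_a^{>k}|$ by scanning the input and counting chain starts that lie outside $\dom(a)$ and produce chains long enough under iteration of $a$).

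For $\LOGSPACE$-hardness, since $\sim_{p[m]}$ agrees with group conjugacy whenever $S$ is a group and $I_n$ contains the symmetric group $S_n$ as its group of units, the $\LOGSPACE$-hardness of group conjugacy established in \cite[Thm~4.6]{TJ:CI} transfers directly to p[m]-Conjugacy.

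The main obstacle is establishing the reverse direction of the combinatorial characterization in (ii): one must simultaneously satisfy injectivity, the length tolerance $2^{m-1}$, and the idempotency of both $\phi \circ \psi$ and $\psi \circ \phi$. The greedy longest-first strategy avoids conflicts because pairing a longer chain before a shorter one never leaves the shorter chain without an eligible partner (the window $[\ell-2^{m-1},\ell+2^{m-1}]$ of admissible partners is non-decreasing in the partner's length), and the partial-inverse structure it produces makes the two compositions the identity on their common domain, hence idempotent.
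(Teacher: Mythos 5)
Your overall plan is the paper's plan (reduce membership via Lemma~\ref{lem:invPair} to a cycle-type check plus a counting condition on long chains, and inherit hardness from group conjugacy), but the combinatorial characterization you build the algorithm on is false as stated, because of an off-by-one in the range of $k$. Quantifying only over $k > 2^{m-1}$ means chains of length exactly $2^{m-1}+1$ never occur on the left-hand side of any of your inequalities, so your test never requires them to have partners at all; the missing instance is $k = 2^{m-1}$, i.e. $|\Theta_a^{>2^{m-1}}| \leq |\Theta_b|$ and $|\Theta_b^{>2^{m-1}}| \leq |\Theta_a|$. Concretely, take $m=1$ and $a,b \in I_6$ with $a$ having the three chains $[1,2],[3,4],[5,6]$ and $b$ the two chains $[1,2,3],[4,5,6]$. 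Both are cycle-free, so $a \sim_{p^*} b$, and every inequality you impose (for $k \geq 2$) holds, so your algorithm accepts; but $|\Theta_a^{>1}| = 3 > 2 = |\Theta_b|$, so no injective $\phi$ exists, hence there is no inversive $1$-satisfying pair and $a \not\sim_p b$ by Lemma~\ref{lem:invPair}. This is precisely the situation the paper's Algorithm~\ref{alg:p[m]Conj} is built to catch: its rejection test on $C[2^{m-1}+1]$ fires when a surplus long chain can no longer be paired within the tolerance, and the loop runs all the way down to $k=1$, which supplies the boundary comparison your family omits. (Your forward direction is fine; it is the "if" direction that breaks.)

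Even with the corrected family (which, since the admissible-partner sets are nested in chain length, is by Hall's theorem equivalent to the separate existence of injective $\phi$ and $\psi$ meeting the length bounds), the sufficiency step still needs a genuine argument that the two maps can be chosen compatibly so that $\phi \circ \psi$ and $\psi \circ \phi$ are idempotent; this is exactly what the paper's correctness paragraph does, implementing the longest-first pairing with the counter array and using the observation (made explicit in the proof of Lemma~\ref{lem:invPair}) that an injective idempotent partial map fixes its domain, so long--long matches must be mutual. You assert this compatibility is "automatic," and you describe admissible partners by a two-sided window $[\ell-2^{m-1},\ell+2^{m-1}]$, whereas the definition bounds only one side ($|\theta| \leq |\phi(\theta)|+2^{m-1}$): the two-sided window is correct for mutual long--long pairs, but long chains may also be sent to short chains of length at most $2^{m-1}$, which lie outside $\dom(\psi)$, and your greedy/exchange argument should handle that asymmetric case explicitly. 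The hardness half and the space accounting (constantly many counters of $O(\log n)$ bits, chain counts obtained by the routines of \cite{TJ:CI}) are fine and match the paper.
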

\begin{proof}
The argument in Theorem 4.6 proves that every semigroup conjugacy is $\LOGSPACE$-hard, so we need only show that p[m]-Conjugacy can be deterministically solved using at most logarithmic space. By Lemma~\ref{lem:invPair}, we need to check that $a \sim_{p^*} b$ and that there exists an inversive $m$-satisfying pair for $a$ and $b$. Lemma 4.5 from \cite{TJ:CI} proves that $a \sim_{p^*} b$ can be deterministically checked using at most logarithmic space. We claim that Algorithm~\ref{alg:p[m]Conj} deterministically checks whether there exists an inversive $m$-satisfying pair for $a$ and $b$.

\emph{Correctness:} The entries in $C$ will track the number of unpaired chains. A positive value indicates the unpaired chains are from $\Theta_a$. A negative value indicates the unpaired chains are from $\Theta_b$. At iteration $k$, the unpaired chains counted by $C[i]$ will be length $i+k$. For the first iteration, $k=n$, Lines 4-16 do nothing, since all the entries of $C$ are zero. The calculation in Line 17 assumes chains from $\Theta_a^n$ will be paired with chains from $\Theta_b^n$ and $C[1]$ stores the number of unpaired chains. On the next $2^{m-1}$ iterations, Lines 4-5 will still do nothing and Lines 8-16 will update the values for $C$, accounting for chains from $\Theta_a^k$ and $\Theta_b^k$ being paired with chains that are at most $2^{m-1}$ longer.

Line 15 increments the indices, since the next iteration will be considering chains of length one less. So, during iteration $k$, $C[2^{m-1}+1]$ represents the number of chains of length $2^{m-1}+1+k$ that could not have been paired in previous iterations. Line 4 checks if these chains can be paired with chains of length $k$. If not, then no inversive $m$-satisfying pair for $a$ and $b$ exists and Line 5 rejects. If Line 5 does not reject by the time $k=0$, then all remaining unpaired chains, as tracked by the entries in $C$, have lengths less than or equal to $2^{m-1}$. Thus, an inversive $m$-satisfying pair for $a$ and $b$ exists and Line 20 accepts.

\emph{Runspace:} Algorithm 2 from \cite{TJ:CI} proves that Line 4 runs in deterministic logarithmic space. Although Algorithm~\ref{alg:p[m]Conj} needs to store values for $t_a, t_b, C[1], \dots, C[2^{m-1}],$ and $k$, this is a constant number of values with respect to the input variable $n$. So this algorithm uses at most logarithmic space.
\end{proof}

  \begin{algorithm}
  \caption{$\LOGSPACE$ algorithm to check if $a \sim_{p[m]} b$ in $I_n$}
    \label{alg:p[m]Conj}
    \begin{algorithmic}[1]
      \Input{$a,b \in I_n$}
      \Output{Is $a \sim_{p[m]} b$ in $I_n$?}
      \State Initialize: $C = [0,\dots,0]$ with indices $i=1,\dots,2^{m-1}+1$
      \State Initialize: $k:=n$
      \While{$k > 0$}
        \State $t_a := |\Theta_a^k|$ and $t_b := |\Theta_b^k|$
        \If{$t_a + C[2^{m-1}+1] < 0$ or $t_b - C[2^{m-1}+1] < 0$}
          \State \Reject
        \EndIf
        \State $i:=2^{m-1}$
        \While{$i>0$}
          \If{$C[i]>0$}
            \State $C[i]:=\max(0,C[i]-t_b)$ and $t_b:=\max(0,t_b-C[i])$
          \EndIf
          \If{$C[i]<0$}
            \State $C[i]:=\min(0,C[i]+a)$ and $t_a:=\max(0,C[i]+t_a)$
          \EndIf
          \State $C[i+1]:=C[i]$ and $i:=i-1$
        \EndWhile
        \State $C[1]:=t_a - t_b$ and $k:=k-1$
      \EndWhile
      \State \Accept
    \end{algorithmic}
  \end{algorithm}

\begin{proposition} \label{lem:simEq}
For $I_n$, $\sim_{p[m]} = \sim_{p^*}$ iff $m \geq \log_2(n-1)+1$.
\end{proposition}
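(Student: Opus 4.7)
My plan is to invoke Lemma~\ref{lem:invPair} as a translation: $a \sim_{p[m]} b$ in $I_n$ iff $a \sim_{p^*} b$ and there is an inversive $m$-satisfying pair for $a$ and $b$. Since every chain of an element of $I_n$ has length at most $n$, the length-difference conditions $|\theta| \leq |\phi(\theta)| + 2^{m-1}$ and $|\theta'| \leq |\psi(\theta')| + 2^{m-1}$ become automatic exactly when $2^{m-1} \geq n-1$, equivalently $m \geq \log_2(n-1)+1$. The whole proof turns on this single threshold.

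For the $(\Leftarrow)$ direction, assume $2^{m-1} \geq n-1$ and let $a, b \in I_n$ with $a \sim_{p^*} b$; I will exhibit an inversive $m$-satisfying pair. If $2^{m-1} \geq n$, both $\Theta_a^{>2^{m-1}}$ and $\Theta_b^{>2^{m-1}}$ are empty, so the pair of empty maps trivially works. If $2^{m-1} = n-1$, the ``long'' chains are exactly those of length $n$, and any such chain uses all $n$ points, so each of $a, b$ has at most one. When both have one, pair them via $\phi$ and $\psi$, which are then mutually inverse bijections of singletons whose compositions are the identity. When only $a$ has a length-$n$ chain, set $\phi$ to send it to any chain of $b$ (one exists because the shared empty cycle type, forced by $a \sim_{p^*} b$, puts all $n$ points of $b$ into chains) and let $\psi$ be empty; both compositions then have empty natural domain and are vacuously idempotent. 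The symmetric case is handled identically.

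For the $(\Rightarrow)$ direction, I contrapose: assuming $2^{m-1} < n-1$, I construct $a, b \in I_n$ with $a \sim_{p^*} b$ but $a \not\sim_{p[m]} b$. Take $a$ to be the partial bijection whose only chain is $[1,2,\dots,n]$ and $b$ to be the empty partial bijection on $[n]$, whose chains are the $n$ singletons. Both have empty cycle type, so $a \sim_{p^*} b$. The unique chain of $a$ has length $n > 2^{m-1}$, so in any inversive $m$-satisfying pair, $\phi$ must send it to a chain of $b$ of length at least $n - 2^{m-1} > 1$; but every chain of $b$ has length $1$, a contradiction. By Lemma~\ref{lem:invPair}, $a \not\sim_{p[m]} b$. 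I expect the only delicate point to be the boundary case $2^{m-1} = n-1$ in the $(\Leftarrow)$ direction, but since at most one long chain exists on each side, the finite case analysis above dispatches it without further subtlety.
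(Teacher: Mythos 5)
Your proposal is correct and follows essentially the same route as the paper: reduce via Lemma~\ref{lem:invPair}, observe that when $2^{m-1}\geq n-1$ only a length-$n$ chain can exceed the threshold (forcing empty cycle type, so a target chain exists and the length bounds hold), and for $2^{m-1}<n-1$ use the same counterexample of the full chain $[1,\dots,n]$ versus the empty map. Your explicit case split at the boundary $2^{m-1}=n-1$ just spells out what the paper's WLOG argument handles in one pass.
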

\begin{proof}
By Lemma~\ref{lem:invPair}, the only way $\sim_{p[m]} \neq \sim_{p^*}$ is if there are $a,b \in I_n$ such that $a \sim_{p^*} b$, but there exists no inversive $m$-satisfying pair for $a$ and $b$. If $\Theta_a^{>2^{m-1}}$ and $\Theta_b^{>2^{m-1}}$ are empty, then vacuously, an inversive $m$-satsifying pair exists. If $m \geq \log_2(n-1)+1$, then $2^{m-1} \geq n-1$ and the only chain that could be in either set would be of length $n$. WLOG, let $\theta \in \Theta_a^{>2^{m-1}}$ have length $n$. Then $a$ has no cycles. If $b$ has a cycle, then $a \not \sim_{p^*} b$. If $b$ has no cycle, then there exists $\theta' \in \Theta_b$ and we can define $\phi(\theta) = \theta'$. Note that $|\theta| = 1 + (n-1) \leq |\phi(\theta)|+2^{m-1}$. If $|\theta'|<n$ we need not define $\psi(\theta')$. Otherwise define $\psi(\theta') = \theta$. Likewise, $|\theta'| \leq |\psi(\theta')|+2^{m-1}$.

Suppose $m < \log_2(n-1)+1$ and define $a,b \in I_n$ as follows. Define $xa = x+1$ for $x \in [n-1]$ and $n \not \in \dom(a)$. Define $\dom(b) = \emptyset$. Neither element has cycles, so $a \sim_{p^*} b$. Because $m<\log_2(n-1)+1$, $2^{m-1}<n-1$ and $[1,\dots,n] \in \Theta_a^{>2^{m-1}}$. Since the longest chain in $\Theta_b$ is of length 1, for any $\phi:\Theta_a^{>2^{m-1}} \rightarrow \Theta_b$, $|[1,\dots,n]| = n = 1 + (n-1) > |\phi([1,\dots,n])| + 2^{m-1}$. Thus, no inversive $m$-satisfying pair exists and, by Lemma~\ref{lem:invPair}, $a \not \sim_{p[m]} b$.
\end{proof}

Then, for any two integers $0 < n < m$, $\sim_{p[m]} \, = \, \sim_{p^*} \, \neq \, \sim_{p[n]}$ for $I_{2^n+1}$. Thus, $\{\sim_{p[m]}:m \in \mathbb{N}\}$ forms an infinite chain of first-order definable relations, each of which equals group conjugacy when the semigroup is a group, though none of which are, in general, transitive.

\section{Open Problems}
\begin{problem}
For each $\sim \, \in \{\sim_o,\sim_{tr},\sim_{p^*},\sim_n,\sim_u\}$, what is the largest $n \in \mathbb{N}$ such that $\sim \cap \gH \, = \, \sim_g$ for all semigroups with at most $n$ elements?
\end{problem}

\begin{problem}
Is there an epigroup for which $\sim_{tr} \cap \gH \not \subseteq \sim_{p^*}$?
\end{problem}

\begin{problem}
Is there a first-order definable semigroup conjugacy other than $\sim_g$ that is strictly contained in $\sim_u \cap \gH$?
\end{problem}

\begin{problem}
Is there a finite semigroup for which $\sim_o$ is a congruence and $S / \sim_o$ is not cancelative?
\end{problem}

\end{document}